\documentclass{siamart1116}

\usepackage{amssymb}
\usepackage{amsfonts}
\usepackage{csquotes}
\usepackage{pgfplots} \pgfplotsset{compat=1.11}
\usepackage{microtype}


\newcommand{\pd}{\partial}
\newcommand{\ddx}[2]{\frac{{\mathrm d}{#1}}{{\mathrm d}{#2}}}

\newcommand{\pdx}[2]{\frac{\partial {#1}}{\partial {#2}}}
\newcommand{\idx}[4]{\int_{#1}^{#2} {#3}\,{\mathrm d}{#4}}

\newcommand{\vv}[1]{\mathbf{#1}}        
\newcommand{\gv}[1]{\boldsymbol{#1}}    
\newcommand{\sm}[1]{\mathsf{#1}}        
\newcommand{\sv}[1]{\mathsf{#1}}        

\DeclareMathOperator{\card}{card}
\DeclareMathOperator{\supp}{supp}
\DeclareMathOperator{\clos}{clos}
\DeclareMathOperator{\inte}{int}
\DeclareMathOperator{\diam}{diam}
\DeclareMathOperator{\Span}{span}
\DeclareMathOperator{\meas}{meas}
\DeclareMathOperator{\cond}{cond}
\DeclareMathOperator{\diag}{diag}

\newcommand{\phiref}{\hat{\varphi}}
\newcommand{\bigO}[1]{\mathcal{O}({#1})}
\newcommand{\project}{\mathcal{P}}
\newcommand{\extend}{\mathcal{E}}
\newcommand{\solve}{\mathcal{S}}

\newsiamremark{remark}{Remark}
\newsiamremark{assumption}{Assumption}

\hypersetup{
  pdftitle={A Smooth Partition of Unity Finite Element Method for Vortex Particle Regularization},
  pdfauthor={Matthias Kirchhart and Shinnosuke Obi}
}

\title{A Smooth Partition of Unity Finite Element Method for Vortex Particle Regularization}
\author{%
 Matthias Kirchhart%
 \thanks{Department of Mechanical Engineering, Keio University, Japan. 
         (\email{kirchhart@keio.jp}, \email{obsn@mech.keio.ac.jp})}
 \and
 Shinnosuke Obi\footnotemark[1]}

\newcommand{\TheTitle}{A Smooth PUFEM for Particle Regularization} 
\newcommand{\TheAuthors}{M. Kirchhart and S. Obi}
\headers{\TheTitle}{\TheAuthors}

\begin{document}
\maketitle
\begin{abstract}
We present a new class of $C^\infty$-smooth finite element spaces on Cartesian
grids, based on a partition of unity approach. We use these spaces to construct
smooth approximations of particle fields, i.\,e., finite sums of weighted Dirac
deltas. In order to use the spaces on general domains, we propose a fictitious
domain formulation, together with a new high-order accurate stabilization. Stability,
convergence, and conservation properties of the scheme are established. Numerical
experiments confirm the analysis and show that the Cartesian grid-size $\sigma$
should be taken proportional to the square-root of the particle spacing~$h$,
resulting in significant speed-ups in vortex methods.
\end{abstract}

\begin{keywords}
  vortex method, particle method, partition of unity finite element method,
  smooth shape functions, fictitious domains, Biot--Savart law
\end{keywords}

\begin{AMS}
65N12, 65N15, 65N30, 65N75, 65N80, 65N85
\end{AMS}

\section{Introduction}
Vortex particle methods are numerical schemes for solving the incompressible
Navier--Stokes equations. Instead of their formulation in the primitive variables
velocity $\vv{u}$ and pressure, they make use of the equivalent formulation in
terms of the vorticity $\gv{\omega} = \nabla\times\vv{u}$. The core idea is most
easily explained in the two-dimensional, inviscid case, where the equation for
the scalar vorticity $\omega$ in a bounded domain~$\Omega$ then reads:
\begin{equation}\label{eqn:vte}
\pdx{\omega}{t} + (\vv{u}\cdot\nabla)\omega = 0\qquad \mbox{in $\Omega$.}
\end{equation}
Let us for the moment assume the velocity field would be known, in which case 
we have a linear equation. We then can discretize the initial vorticity field
with \emph{particles}:
\begin{equation}\label{eqn:pufem_particles}
\omega(t=0)\approx\omega_h(0) = \sum_{i=1}^{N}\Gamma_i\delta(x-x_i(0)),
\end{equation} 
where $\Gamma_i$ and $x_i$ denote the circulation and position of particle $i$,
and $\delta$ is the Dirac delta function. Such particle fields can be seen
as quadrature rules for integrating smooth functions $\varphi$ against the
vorticity $\omega$ we are aiming to approximate. Let us for example assume
we are given a quasi-uniform, shape-regular triangulation of the domain $\Omega$
of mesh-width~$h$. If one then applies a quadrature rule of exactness degree~$m$
with positive weights to each cell, one obtains a set of quadrature nodes $x_i$
with associated weights $w_i$. A particle field approximation could then
be obtained by setting $\Gamma_i:= w_i\omega(x_i)$ in \cref{eqn:pufem_particles}.
For such a particle field one can prove error-bounds of the form:
\begin{equation}
\Vert\omega-\omega_h\Vert_{W^{-(m+1),2}(\Omega)}\leq Ch^{m+1}\Vert\omega\Vert_{W^{m+1,2}(\Omega)},
\end{equation}
for $m+1>d/2$, where $d$ is the number of spatial dimensions. Here and throughout
this text the symbol $C$ refers to a generic positive constant which is
independent of the functions involved.

The reason for choosing this particular discretization is the availability of an
analytic solution of \cref{eqn:vte} in this case. If one modifies the particles'
positions according to $\ddx{x_i}{t} = \vv{u}(x_i(t))$,  $i = 1,\ldots,N$, the
resulting approximation $\omega_h$ fulfills the vorticity equation~\cref{eqn:vte}
\emph{exactly}, i.\,e., the only error in the approximation comes from the 
initialization error~\cite[Appendix~A]{cottet2000}.

In practice, however, the velocity field is of course not known and needs to be
retrieved from the vorticity. Let us for simplicity assume that the velocity
would vanish at the boundaries. In this case the Helmholtz decomposition theorem
tells us that the velocity can be retrieved through the Biot--Savart law without
any boundary integral terms:
\begin{equation}
\vv{u} = \vv{K}\star\omega,\ \vv{K}(x) = \frac{1}{2\pi}\frac{(-x_2,x_1)^\top}{|x|^2},
\end{equation}
where $\star$ denotes convolution. We have the following classical estimate due
to Calder\'on and Zygmund~\cite{calderon1952}:
\begin{equation}\label{eqn:calzyg}
\Vert\vv{K}\star\omega\Vert_{W^{1,2}(\Omega)}\leq C\Vert\omega\Vert_{L^2(\Omega)}.
\end{equation}
The problem is that the particle approximation $\omega_h\not\in L^2(\Omega)$ is
not smooth enough to apply this estimate; applying the Biot--Savart law to the
particle field directly yields a singular velocity field. The question we try to
answer in this paper is how to obtain an accurate, smooth approximation
$\omega_\sigma\in L^2(\Omega)$ from the particle field $\omega_h$, where $\sigma$
refers to a smoothing length, which will be defined precisely later. This problem
is called \emph{particle regularization.} Once such a smooth approximation has
been obtained, one closes the system of equations by setting $\vv{u}_\sigma :=
\vv{K}\star\omega_\sigma$ and modifying the particle's positions according to
$\ddx{x_i}{t} = \vv{u}_\sigma(x_i(t),t)$ instead. It is this natural treatment
of convection which makes vortex methods so appealing. Given an appropriate choice
of $\omega_\sigma$ one can show that the resulting method is essentially free of
artificial viscosity and conserves mass, circulation, linear momentum, and angular
momentum, and the energy of $\vv{u}_\sigma$ exactly~\cite[Section~2.6]{cottet2000}.
When extended to handle physical viscosity, this makes the method particularly
attractive for flows at high Reynolds numbers, see for example the recent work by
Yokota et al.~\cite{yokota2013}.

The most common approach to the regularization problem is to mollify the particle
field with a certain, radially symmetric \emph{blob-function} $\zeta_\sigma$:
$\omega_\sigma := \omega_h\star\zeta_\sigma$, where $\sigma$ denotes the radius
of the blob's core~\cite[Section~2.3]{cottet2000}. Many commonly used blob-functions
have infinite support, effectively extending $\omega_\sigma$ from $\Omega$ to
$\mathbb{R}^d$ and blurring the domain's boundaries. There are approaches to use
blob-functions with varying shapes near boundaries \cite{teng1982,lakkis2009} or
to use image particles outside of the domain~\cite[Section~4.5.2]{cottet2000}.
These approaches assume that the boundaries are flat and usually fail in the
presence of sharp corners or kinks. Another approach is to interpret the
particles' circulations as weighted function values $\Gamma_i = w_i\omega(x_i)$,
where the $w_i$ are weights from an underlying quadrature rule. While this is
strictly speaking only the case during the initialization stage, the approach is
then to create a triangulation of the domain using the particles' positions as
grid nodes and to use these values to construct a piece-wise linear
approximation~\cite{russo1994}. This requires a mesh to be regenerated at every
time-step, which is problematic as the particle field gets distorted over time.
In Vortex-in-Cell (VIC) schemes one uses interpolation formulas to obtain a
grid-based approximation of the vorticity field. In the vicinity of boundaries
these formulas need to be specifically adapted to the particular geometry at hand
and cannot be used for arbitrary domains~\cite{cottet2004}. In general the
regularization problem causes significant difficulties, and as Cottet and
Koumoutsakos point out in the introduction of their book~\cite{cottet2000}:
\enquote{To our knowledge there is no completely satisfactory solution for general
geometries, in particular because of the need to regularize vortices near the
boundary.}

In this work we try to address this problem with the help of a finite element
formulation. The non-smooth $W^{-(m+1),2}$-nature of the particle field forces
us to use shape functions that are globally $W^{m+1,2}$-smooth, which is not the
case for the classical, piecewise linear elements. The partition of unity finite
element method~(PUFEM) by Melenk and Babu\v{s}ka is a generalization of the
classical finite element method (FEM), which can be used to obtain such smooth
spaces. Even though already mentioned in their introductory paper~\cite{melenk1996},
there seems to have been little research in this direction. Duarte et al.~\cite{duarte2006}
describe an approach which only works for certain triangulations in two
dimensions.

The generation of globally smooth shape functions on general meshes in higher
dimensions is a well-known, hard problem. We instead consider simple Cartesian
grids, on which the construction of smooth shape functions is easier. We then
apply a fictitious domain approach to deal with general geometries. This
typically results in instabilities in the cut elements. Under the name \emph{ghost
penalty} Burman~\cite{burman2010} presented an effective and accurate
stabilization strategy for this problem, which has for example been successfully
applied to several other flow problems with cut elements~\cite{burman2012,%
gross2016,hansbo2013,kirchhart2016}. We use a similar, higher-order approach
inspired by Burman and Fern\'andez~\cite{burman2014} as well as Cattaneo et al.~%
\cite{cattaneo2015} to achieve accuracy and stability of the resulting
discretization.

The rest of this article is structured as follows. In \cref{sec:pufem} we
define and construct smooth PUFEM spaces and analyze some of their important
properties. In \cref{sec:formulation} we introduce a stabilized variational
formulation and prove its stability and convergence. The regularization
problem is then treated as a perturbation to this variational formulation.
Similar to the approach with blob-functions as mentioned above, the resulting
error can be split into regularization and quadrature error parts which
need to be carefully balanced. The analysis will show that the smoothing
parameter $\sigma$ should be taken proportional to the square-root of the
particle-spacing $h$ and that this choice is in a certain sense optimal. In
\cref{sec:experiments} we perform numerical experiments, confirming our
analysis. As a consequence of the quadratic relation between $h$ and $\sigma$,
the computation of the velocity field only has a computational complexity of
$\bigO{h^{-\frac{d}{2}}}$, enabling the use of particle numbers on desktop
workstations which were previously only possible on super computers. We
finish the article with concluding remarks and acknowledgements.

\section{Smooth Partition of Unity Finite Element Spaces}\label{sec:pufem}
\subsection{Basic Theory}
We begin this subsection by defining smooth partitions of unity, similar to
Melenk's and Babu\v{s}ka's theory~\cite{melenk1996}.
\begin{definition}[Smooth Partition of Unity]\label{def:pou}
Let $\Omega\subset\mathbb{R}^d$ be an open set and let $\lbrace\Omega_i\rbrace$
be an open cover of $\Omega$ satisfying a pointwise overlap condition:
\begin{equation}
\exists M\in\mathbb{N}:\ \forall x\in\Omega:\ 
\card\lbrace i\,|\,x\in\Omega_i\rbrace\leq M.
\end{equation}
Let $\lbrace\varphi_i\rbrace$ be a Lipschitz partition of unity subordinate to
the cover $\lbrace\Omega_i\rbrace$ satisfying
\begin{align}
\supp\varphi_i      &\subset \clos\Omega_i, \\
\sum_{i}\varphi_i(x)  &\equiv 1\ \text{on $\Omega$},\\
\vert\varphi_i\vert_{W^{k,\infty}(\mathbb{R}^d)} &\leq
C(k)(\diam{\Omega_i})^{-k}\quad k\in\mathbb{N}_0,\label{eqn:phinorm}
\end{align}
where the $C(k)$ are positive constants and the symbol
$\vert\cdot\vert_{W^{k,p}(\mathbb{R}^d)}$ refers to the Sobolev semi-norms:
\begin{equation}
\vert f\vert_{W^{k,p}(\mathbb{R}^d)} :=
\begin{cases}
\biggl(\sum_{|\alpha|=k}\Vert\pd^\alpha f\Vert_{L^p(\mathbb{R}^d)}^{p}\biggr)^{1/p} & p\in[1,\infty), \\
\max_{|\alpha|=k}\Vert\pd^\alpha f\Vert_{L^\infty(\mathbb{R}^d)} & p = \infty.
\end{cases}
\end{equation}
Then, $\lbrace\varphi_i\rbrace$ is called a smooth partition of unity subordinate
to the cover $\lbrace\Omega_i\rbrace$. The sets $\Omega_i$ are called patches.
\end{definition}
Using these functions $\lbrace\varphi_i\rbrace$, we can define the spaces for
the partition of unity finite element method (PUFEM).
\begin{definition}[PUFEM Spaces]\label{def:pufemspace}
Let $\lbrace\varphi_i\rbrace$ be a smooth partition of unity subordinate to
the open cover $\lbrace\Omega_i\rbrace$. For $P,k\in\mathbb{N}_0$, $p\in[1,\infty]$
we define polynomial enrichment spaces $V_i^P\subset W^{k,p}(\Omega\cap
\Omega_i):$
\begin{equation}
V_i^P := \Span\bigl\lbrace
x^\alpha\,\bigr|\,|\alpha|\leq P
\bigr\rbrace,
\end{equation}
and the PUFEM spaces $V_\sigma^P(\Omega)\subset W^{k,p}(\Omega):$
\begin{equation}
V_\sigma^P := \Span\bigl\lbrace
\varphi_iv_i\,\bigr|\,%
v_i\in V_i^P
\bigr\rbrace
\end{equation}
where  $\sigma := \max_i\diam{\Omega_i}$ refers to the maximum patch diameter.
\end{definition}

\begin{assumption}\label{ass:bramble-hilbert}
We will assume that the shapes of the domain $\Omega$ and the patches
$\lbrace\Omega_i\rbrace$ are such that we can apply the Bramble--Hilbert
lemma~\cite[Lemma (4.3.8)]{brenner2008}. In particular, we will assume that for
all $u\in W^{P+1,p}(\Omega\cap\Omega_i)$, $p\in[1,\infty]$, there exists a
$v_i\in V_i^P$ such that:
\begin{equation}\label{eqn:bramble-hilbert}
\vert u-v_i\vert_{W^{k,p}(\Omega\cap\Omega_i)}
\leq
C\sigma^{P+1-k}\vert u\vert_{W^{P+1,p}(\Omega\cap\Omega_i)}\ 
\forall k\in\mathbb{N}_0,\ k\leq P + 1,
\end{equation}
where the constant $C$ is independent of $\sigma$ and $u$.
\end{assumption}

We then have the following estimate, which is a straightforward generalization
of the result of Melenk and Babu\v{s}ka~\cite[Theorem 2.1]{melenk1996}.
\begin{theorem}\label{thm:approx}
Let $V_\sigma^P(\Omega)$ be as in \cref{def:pufemspace} and let~\cref{ass:bramble-hilbert}
be fulfilled. Then for any $u\in W^{P+1,p}(\Omega)$, $p\in[1,\infty]$, there exists
$\project u\in V_\sigma^P(\Omega)$ such that:
\begin{equation}
\vert u-\project u\vert_{W^{k,p}(\Omega)}\leq C\sigma^{P+1-k}\vert u\vert_{W^{P+1,p}(\Omega)}\ 
\forall k\in\mathbb{N}_0,\ k\leq P + 1,
\end{equation}
where the constant $C$ is independent of $\sigma$ and $u$.
\end{theorem}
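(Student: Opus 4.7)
The plan is to mimic the classical Melenk--Babu\v{s}ka argument from \cite{melenk1996}: build the global approximant by gluing together local polynomial approximants with the partition of unity, then estimate derivatives via Leibniz, using the flatness bound \cref{eqn:phinorm} on the $\varphi_i$ against the local approximation power of \cref{eqn:bramble-hilbert}.

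First I would invoke \cref{ass:bramble-hilbert} on each patch to pick, for every $i$, some $v_i\in V_i^P$ with
$\vert u-v_i\vert_{W^{j,p}(\Omega\cap\Omega_i)}\leq C\sigma^{P+1-j}\vert u\vert_{W^{P+1,p}(\Omega\cap\Omega_i)}$
for all $0\le j\le P+1$, and then set $\project u:=\sum_i \varphi_i v_i\in V_\sigma^P(\Omega)$. Using $\sum_i\varphi_i\equiv 1$ on~$\Omega$ I rewrite
$u-\project u=\sum_i\varphi_i(u-v_i)$, so each term is supported in $\clos\Omega_i$ and at most $M$ terms are nonzero at any given point.

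Next, for a multi-index $\alpha$ with $|\alpha|=k\le P+1$, Leibniz gives
$\pd^\alpha(u-\project u)=\sum_i\sum_{\beta\le\alpha}\binom{\alpha}{\beta}\pd^\beta\varphi_i\,\pd^{\alpha-\beta}(u-v_i)$.
Bounding $\pd^\beta\varphi_i$ in $L^\infty$ by $C\sigma^{-|\beta|}$ via \cref{eqn:phinorm} and bounding $\pd^{\alpha-\beta}(u-v_i)$ in $L^p(\Omega\cap\Omega_i)$ by $C\sigma^{P+1-(k-|\beta|)}\vert u\vert_{W^{P+1,p}(\Omega\cap\Omega_i)}$ via the Bramble--Hilbert estimate, the exponents telescope to $\sigma^{P+1-k}$ and I obtain
$\Vert\pd^\beta\varphi_i\,\pd^{\alpha-\beta}(u-v_i)\Vert_{L^p(\Omega\cap\Omega_i)}\le C\sigma^{P+1-k}\vert u\vert_{W^{P+1,p}(\Omega\cap\Omega_i)}$.

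Finally I collect the patch contributions. For $p<\infty$, the pointwise overlap bound $M$ combined with Jensen's inequality yields
$\Vert\sum_i a_i\Vert_{L^p(\Omega)}^p\le M^{p-1}\sum_i\Vert a_i\Vert_{L^p(\Omega\cap\Omega_i)}^p$, and the same overlap property gives $\sum_i\vert u\vert_{W^{P+1,p}(\Omega\cap\Omega_i)}^p\le M\vert u\vert_{W^{P+1,p}(\Omega)}^p$; the case $p=\infty$ is even simpler since at most $M$ terms contribute at each point. Combining these with the previous display and summing over $\beta\le\alpha$ (finitely many multi-indices, so the binomial factors are absorbed into $C$) and over $|\alpha|=k$ yields the claimed estimate. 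I expect no real obstacle; the only bookkeeping step worth care is the exponent accounting in the Leibniz expansion together with the overlap-based passage from a sum of local seminorms to a single global seminorm.
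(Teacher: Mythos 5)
Your proposal is correct and follows essentially the same route as the paper's own proof: the same construction $\project u=\sum_i\varphi_i v_i$, the same Leibniz expansion, the bound on $\pd^\beta\varphi_i$ via \cref{eqn:phinorm} paired with \cref{eqn:bramble-hilbert}, and the finite-overlap/H\"older (equivalently Jensen) step to pass from local to global seminorms. If anything, you spell out the final bookkeeping step $\sum_i\vert u\vert_{W^{P+1,p}(\Omega\cap\Omega_i)}^p\le M\vert u\vert_{W^{P+1,p}(\Omega)}^p$ more explicitly than the paper, which compresses it into \enquote{collecting all the terms}.
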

\begin{proof}
Here, we will only consider the case $p\in[1,\infty)$; the proof for the case
$p=\infty$ is analogous.  With $v_i\in V_i^P$ as in \cref{ass:bramble-hilbert}
we set $\project u:=\sum_i\varphi_iv_i$. We may then write for any multi-index
$\alpha$ with $|\alpha|=k$:
\begin{equation}
\Vert\pd^\alpha(u-\project u)\Vert_{L^p(\Omega)}^p =
\bigl\Vert\pd^\alpha\sum_i(u-v_i)\varphi_i\bigr\Vert_{L^p(\Omega)}^p =
\bigl\Vert\sum_{\beta\leq\alpha}\sum_i{\binom{\alpha}{\beta}}
\pd^\beta\varphi_i\pd^{\alpha-\beta}(u-v_i)\bigr\Vert_{L^p(\Omega)}^p.
\end{equation}
Considering the absolute value of the expanded derivative on the right,
we obtain using H\"older's inequality:
\begin{equation}
\biggl|\sum_{\beta\leq\alpha}\sum_{i}\begin{pmatrix}\alpha\\\beta\end{pmatrix}%
\pd^\beta\varphi_i\pd^{\alpha-\beta}\bigl(u-v_i\bigr)\biggr|^p
\leq C(\alpha,p)\sum_{\beta\leq\alpha}\biggl|\sum_{i}%
\pd^\beta\varphi_i\pd^{\alpha-\beta}\bigl(u-v_i\bigr)\biggr|^p
\end{equation}
and thus:
\begin{equation}
\Vert\pd^\alpha(u-\project u)\Vert_{L^p(\Omega)}^p \leq C\sum_{\beta\leq\alpha}\ 
\bigl\Vert
\sum_{i}\pd^\beta\varphi_i\pd^{\alpha-\beta}(u-v_i)
\bigr\Vert_{L^p(\Omega)}^p.
\end{equation}
Now, using the fact that for every point $x\in\Omega$ there are at most $M$
non-zero terms in the sum over $i$, we obtain by again using H\"older's
inequality:
\begin{equation}
\biggl|\sum_{i}
\pd^\beta\varphi_i\pd^{\alpha-\beta}(u-v_i)
\biggr|^p
\leq C(M,p)
\sum_{i}\bigl|
\pd^\beta\varphi_i\pd^{\alpha-\beta}(u-v_i)
\bigr|^p
\end{equation}
After inserting this in the previous relation we may exchange the order of
summation. Using the fact that $\varphi_i \equiv 0$ outside $\Omega_i$
we obtain:
\begin{equation}
\Vert\pd^\alpha(u-\project u)\Vert_{L^p(\Omega)}^p \leq C\sum_{i}\sum_{\beta\leq\alpha}%
\Vert\pd^\beta\varphi_i\pd^{\alpha-\beta}(u-v_i)\Vert_{L^p(\Omega\cap\Omega_i)}^p.
\end{equation}
Applying \cref{eqn:phinorm}, \cref{eqn:bramble-hilbert}, and collecting all
the terms yields the claim.
\end{proof}

\subsection{Construction of a Smooth Partition of Unity}\label{sec:smoothpart}
In this subsection we are going to construct such a smooth partition of unity
using mollification. We will make use of the following two definitions.
\begin{definition}[Friedrichs' Mollifier]
The function:
\begin{equation}
\begin{split}
\zeta: \mathbb{R}\to[0,K^{-1}],\qquad x&\mapsto
\begin{cases}
0 & \mbox{if $|x|\geq \tfrac{1}{2}$,}  \\
K^{-1}\exp{\bigl(-\frac{1}{1-4x^2}\bigr)} & \mbox{else,}
\end{cases}\\
K &\approx 0.221\,996\,908\,084\,039\,719,
\end{split}
\end{equation}
is called Friedrichs' mollifier in one-dimensional space. The constant $K$ was
obtained numerically, such that $\Vert\zeta\Vert_{L^1(\mathbb{R})}=1$.
For spatial dimensions greater than $d=1$ we define Friedrichs' mollifier using
the product:
\begin{equation*}
\zeta: \mathbb{R}^d\to[0,K^{-d}],\qquad
(x_1,\ldots,x_d)\mapsto \prod_{i=1}^{d}\zeta(x_i),
\end{equation*} 
where under a slight abuse of notation, we reused the symbol $\zeta$.
\end{definition}
It is well known that $\zeta\in C_0^\infty(\mathbb{R}^d)$, and thus also
$\zeta\in W^{k,p}(\mathbb{R}^d),\ k\in\mathbb{N}_0,\ p\in[1,\infty]$. Furthermore,
we have $\supp{\zeta} = [-\tfrac{1}{2},\tfrac{1}{2}]^d$, which leads to the
following definition.
\begin{definition}[Cartesian Grid]\label{def:cartesiangrid}
Given $\sigma>0$, we define Cartesian grid points $x_i\in\mathbb{R}^d$,
$i\in\mathbb{Z}^d$, $x_i := (i_1\sigma,\ldots,i_d\sigma)$. With each grid point
we associate a patch~$\Omega_i$ and a patch-core~$\omega_i$: 
\begin{align}
\Omega_i &:= \bigl((i_1-1)\sigma,(i_1+1)\sigma\bigr) \times \ldots \times 
             \bigl((i_d-1)\sigma,(i_d+1)\sigma\bigr),\\
\omega_i &:= \bigl((i_1-\tfrac{1}{2})\sigma,(i_1+\tfrac{1}{2})\sigma\bigr) \times \ldots \times 
             \bigl((i_d-\tfrac{1}{2})\sigma,(i_d+\tfrac{1}{2})\sigma\bigr).
\end{align}
\end{definition}
\begin{figure}
\centering
\begin{tikzpicture}
\draw[step=1cm] (-3.5,-0.5) grid (3.5,2.5);
\filldraw[fill=black!35!white,thick] (1,1) rectangle node {$Q_j$} (2,2);
\filldraw[radius=0.1] (1,1) circle node [anchor=north east] {$x_j$};
\filldraw[fill=black!50!white,semitransparent,thick] (-3,0) rectangle (-1,2);
\filldraw[fill=black!75!white,semitransparent,thick] (-2.5,0.5) rectangle (-1.5,1.5);
\filldraw[radius=0.1] (-2,1) circle node [anchor=north east] {$x_i$};
\node [fill=white] at (-0.5,2) {$\omega_i$};
\node [fill=white] at (-0.5,0) {$\Omega_i$};
\draw (-1.75,1.25) -- (-0.75,1.95);
\draw (-1.25,0.25) -- (-0.75,0.05);
\end{tikzpicture}
\caption{\label{fig:cartesian_grid}An illustration of the Cartesian grid. On the
left a grid node~$x_i$ together with its associated patch-core~$\omega_i$ and
patch~$\Omega_i$. On the right another grid node $x_j$ with its associated element~$Q_j$.}
\end{figure}
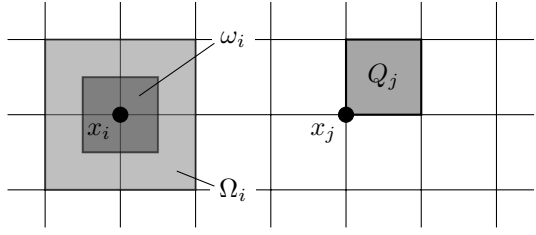
An illustration of these definitions is given in \cref{fig:cartesian_grid}.
It is obvious that the patches $\lbrace\Omega_i\rbrace$ form an open cover of
$\mathbb{R}^d$ with $M$ from \cref{def:pou} being equal to $2^d$. The patch-%
cores $\lbrace\omega_i\rbrace$ are pairwise disjoint and their closures form a
(non-open) cover of $\mathbb{R}^d$. Using these definitions, we are now ready to
construct smooth partition of unity functions $\lbrace\varphi_i\rbrace,
i\in\mathbb{Z}^d$.
\begin{lemma}
For a given $\sigma>0$ and $i\in\mathbb{Z}^d$ let $\varphi_i$ be the convolution
of the characteristic function $\chi_{\omega_i}$ of the patch-core $\omega_i$ with
the scaled Friedrichs' mollifier $\zeta_\sigma(x):=\sigma^{-d}\zeta(x/\sigma)$:
\begin{equation}
\varphi_i(x):= \bigl(\chi_{\omega_i}\star\zeta_\sigma\bigr)(x)
             = \idx{\omega_i}{}{\zeta_\sigma(x-y)}{y}.
\end{equation}
One then has:
\begin{align}
\varphi_i\in C_0^\infty(\mathbb{R}^d) &\mbox{ and } \supp{\varphi_i} = \clos{\Omega_i},\\
\sum_{i\in\mathbb{Z}^d}\varphi_i(x) &\equiv 1\quad x\in\mathbb{R}^d,\\
|\varphi_i|_{W^{k,p}(\mathbb{R}^d)} &\leq C(k)\sigma^{d/p-k}\quad k\in\mathbb{N}_0,\ p\in[1,\infty].
\end{align}
\end{lemma}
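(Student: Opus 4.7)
My plan is to verify the three claims in order, exploiting standard mollification facts together with the fact that the patch-cores $\omega_i$ themselves form an almost-everywhere partition of unity via their characteristic functions.

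For the smoothness and support statement, I would first note that $\zeta_\sigma\in C_0^\infty(\mathbb{R}^d)$ with $\supp\zeta_\sigma = [-\tfrac{\sigma}{2},\tfrac{\sigma}{2}]^d$, so that the convolution $\chi_{\omega_i}\star\zeta_\sigma$ inherits $C^\infty$-regularity (differentiating under the integral sign) and its support is contained in the Minkowski sum $\clos\omega_i + [-\tfrac{\sigma}{2},\tfrac{\sigma}{2}]^d = \clos\Omega_i$. For equality of supports I would show positivity of $\varphi_i$ on $\Omega_i$: given $x\in\Omega_i$, in each coordinate the intervals $((x_i)_k-\tfrac{\sigma}{2},(x_i)_k+\tfrac{\sigma}{2})$ and $(x_k-\tfrac{\sigma}{2},x_k+\tfrac{\sigma}{2})$ both have length $\sigma$ and centers at distance $|x_k-(x_i)_k|<\sigma$, hence intersect in a set of positive measure on which the strictly positive integrand $\zeta_\sigma(x-\cdot)$ can be integrated, giving $\varphi_i(x)>0$.

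For the partition of unity property I would use that the patch-cores $\{\omega_i\}$ are pairwise disjoint with $\bigcup_i\clos\omega_i=\mathbb{R}^d$, so $\sum_i\chi_{\omega_i}\equiv 1$ almost everywhere. Because at most finitely many terms are nonzero in a neighborhood of any fixed $x$ (only those $i$ for which $\clos\omega_i$ meets the ball $x+[-\tfrac{\sigma}{2},\tfrac{\sigma}{2}]^d$), the sum and the integral defining convolution may be exchanged, yielding
\begin{equation}
\sum_{i\in\mathbb{Z}^d}\varphi_i(x) = \idx{\mathbb{R}^d}{}{\zeta_\sigma(x-y)\sum_i\chi_{\omega_i}(y)}{y} = \idx{\mathbb{R}^d}{}{\zeta_\sigma(x-y)}{y} = 1,
\end{equation}
using $\|\zeta_\sigma\|_{L^1(\mathbb{R}^d)}=1$ by construction of the normalization constant $K$ in Friedrichs' mollifier.

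For the Sobolev seminorm bound I would move the derivative onto the smooth factor: $\pd^\alpha\varphi_i = \chi_{\omega_i}\star\pd^\alpha\zeta_\sigma$. Young's convolution inequality in the form $\|f\star g\|_{L^p}\leq\|f\|_{L^p}\|g\|_{L^1}$ gives
\begin{equation}
\Vert\pd^\alpha\varphi_i\Vert_{L^p(\mathbb{R}^d)} \leq \Vert\chi_{\omega_i}\Vert_{L^p(\mathbb{R}^d)}\Vert\pd^\alpha\zeta_\sigma\Vert_{L^1(\mathbb{R}^d)},
\end{equation}
with $\|\chi_{\omega_i}\|_{L^p}=\sigma^{d/p}$ (interpreted as $1$ when $p=\infty$), and a direct scaling computation $\pd^\alpha\zeta_\sigma(x)=\sigma^{-d-|\alpha|}(\pd^\alpha\zeta)(x/\sigma)$ gives $\|\pd^\alpha\zeta_\sigma\|_{L^1}=\sigma^{-|\alpha|}\|\pd^\alpha\zeta\|_{L^1}\leq C(k)\sigma^{-k}$ for $|\alpha|=k$. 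Summing (or taking the max) over multi-indices with $|\alpha|=k$ yields $|\varphi_i|_{W^{k,p}(\mathbb{R}^d)}\leq C(k)\sigma^{d/p-k}$.

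The only step that requires care is the support equality in the first claim — the containment $\supp\varphi_i\subset\clos\Omega_i$ is immediate from the Minkowski sum argument, but the reverse inclusion demands the positivity argument above. Everything else is a clean application of Young's inequality, scaling of mollifiers, and locally finite rearrangement of sums.
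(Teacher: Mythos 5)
Your proof is correct and follows essentially the same route as the paper: the partition-of-unity identity by exchanging the (locally finite) sum with the convolution integral, and the seminorm bound via Young's inequality $\Vert\chi_{\omega_i}\star\pd^\alpha\zeta_\sigma\Vert_{L^p}\leq\Vert\chi_{\omega_i}\Vert_{L^p}\Vert\pd^\alpha\zeta_\sigma\Vert_{L^1}$ together with the scaling $\Vert\pd^\alpha\zeta_\sigma\Vert_{L^1}=\sigma^{-|\alpha|}\Vert\pd^\alpha\zeta\Vert_{L^1}$. The only difference is that where the paper simply cites standard mollification results for $\supp\varphi_i=\clos\Omega_i$, you supply the Minkowski-sum containment and the explicit positivity argument on $\Omega_i$ — a welcome addition, since the cited general results give only the inclusion $\supp\varphi_i\subset\clos\Omega_i$.
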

\begin{proof}
The first property directly follows from the classical properties of
mollification~\cite[sections 2.28 and 2.29]{adams2003}. For the second
property we immediately obtain:
\begin{equation}
\sum_{i\in\mathbb{Z}^d}\varphi_i(x) =
\sum_{i\in\mathbb{Z}^d}\idx{\omega_i}{}{\zeta_\sigma(x-y)}{y} =
\idx{\mathbb{R}^d}{}{\zeta_\sigma(x-y)}{y} = 1.
\end{equation}
For the last property we obtain with the help of Young's inequality for
convolutions for every multi-index $\alpha$ with $|\alpha|=k$:
\begin{equation}
\Vert\chi_{\omega_i}\star\pd^\alpha\zeta_\sigma\Vert_{L^p(\mathbb{R}^d)} \leq
\Vert\chi_{\omega_i}\Vert_{L^p(\mathbb{R}^d)} \Vert\pd^\alpha\zeta_\sigma\Vert_{L^1(\mathbb{R}^d)}  =
\sigma^{d/p-k}\Vert\pd^\alpha\zeta\Vert_{L^1(\mathbb{R}^d)}.
\end{equation}
\end{proof}
\begin{remark}\label{rmk:phiref}
There is no closed-form expression for the functions $\lbrace\varphi_i\rbrace$
available. However, it is important to notice that we have:
\begin{equation}
\varphi_i(x) \equiv\phiref\biggl(\frac{x-x_i}{\sigma}\biggr),\quad
\phiref(x)   :=\idx{(-\frac{1}{2},\frac{1}{2})^d}{}{\zeta(x-y)}{y}.
\end{equation}
Furthermore, $\phiref$ inherits the product structure of $\zeta$. In a computer
implementation it is thus sufficient to tabulate values for $\phiref$ corresponding
to the case $d=1$. We can then efficiently approximate $\phiref$ using, e.\,g.,
cubic Hermite splines. The graph of this function can be seen in \cref{fig:phi}. 
\end{remark}
\begin{figure}
\centering
\begin{tikzpicture}
\begin{axis}%
[
	xlabel = {$x$},
	ylabel = {$\phiref(x)$},
	grid   = {major},
	xmin   = {-1},
    xmax   = {+1},
    ymin   = {0},
	ymax   = {1},
]
\addplot [blue] table {phi.dat};
\end{axis}
\end{tikzpicture}
\caption{\label{fig:phi}An illustration of the one-dimensional partition of
unity function $\phiref$.}
\end{figure}
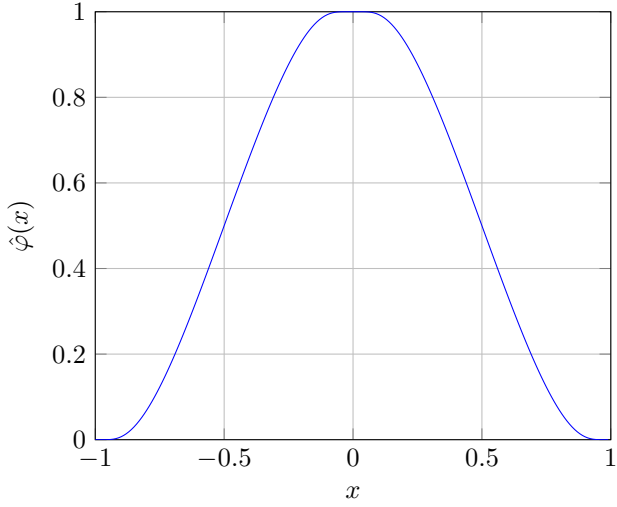

\subsection{Reference Element and Inverse Estimates}
In this subsection we illustrate that the smooth partition of unity constructed
in \cref{sec:smoothpart} leads to spaces that can be treated in a manner similar
to conventional finite element spaces. In particular, we can subdivide
$\mathbb{R}^d$ into \emph{elements}:
\begin{equation}\label{eqn:elements}
Q_i := (i_1\sigma,(i_1+1)\sigma)\times\ldots\times(i_d\sigma,(i_d+1)\sigma),
\qquad i\in\mathbb{Z}^d.
\end{equation}
Such an element is for example depicted on the right of \cref{fig:cartesian_grid}.
Every $Q_i$ may be seen as the image of the \emph{reference element}
$\hat{Q}:=(0,1)^d$ under the transformation $\Phi_i: \hat{Q}\to Q_i,\ 
\hat{x}\mapsto x_i + \sigma\hat{x}$. In every element $Q_i$ we have a fixed
set $\mathcal{J}_i$ of $2^d$ overlapping patches $\Omega_j$. Introducing:
\begin{equation}
\mathcal{B}_j^P := \biggl\lbrace
\underbrace{\biggl(\frac{x-x_j}{\sigma}\biggr)^\alpha}_{=:g_{j,\alpha}(x)}
\,\biggr|\,
|\alpha|\leq P
\biggr\rbrace,\qquad j\in\mathbb{Z}^d
\end{equation}
as bases for the enrichment spaces $V_j^P$, one quickly sees that within each
element $Q_i$ the basis functions $g_{j,\alpha}$ can be expressed in terms of
mapped reference functions $\hat{g}_{m,\alpha}$:
\begin{equation}
g_{j,\alpha}(x) = \bigl(\hat{g}_{m,\alpha}\circ\Phi_i^{-1}\bigr)(x),\qquad
x\in Q_i,\ j\in\mathcal{J}_i,
\end{equation}
where $m$ is the index of the node in the reference element that corresponds to
$x_j$. Due to \cref{rmk:phiref}, the same holds true for the partition of unity
functions $\varphi_j$. This allows us to infer the following  classical result,
which follows from a scaling argument and the norm-equivalence of finite-%
dimensional spaces~\cite[Lemma (4.5.3)]{brenner2008}.

\begin{lemma}[Inverse Estimates]\label{lem:investimates}
Let $V_\sigma^P(\Omega)$, $\sigma>0$, $P\in\mathbb{N}_0$ be as in
\cref{def:pufemspace}, with patches as in \cref{def:cartesiangrid}. Then, for
any element $Q_i\subset\Omega$ as defined in \cref{eqn:elements} that is completely
contained in the domain and every $v_\sigma\in V_\sigma^P(\Omega)$ one has:
\begin{equation}\label{eqn:investimates}
\Vert v_\sigma\Vert_{W^{l,p}(Q_i)}\leq C\sigma^{k-l}\Vert v_\sigma\Vert_{W^{k,p}(Q_i)},\quad
p\in[1,\infty],\ k,l\in\mathbb{N}_0,\ k\leq l,
\end{equation}
where the constant $C$ is independent of $\sigma$ and $i$.
\end{lemma}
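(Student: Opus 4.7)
The plan is to carry out the standard scaling/reference-element argument: localize to a single element, pull back to $\hat Q = (0,1)^d$, apply norm equivalence in the resulting finite-dimensional reference space, and then push forward to pick up the powers of $\sigma$.

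First I would observe that on a single element $Q_i\subset\Omega$ the restriction $v_\sigma|_{Q_i}$ depends only on the $2^d$ patch functions $\varphi_j$, $j\in\mathcal{J}_i$, whose closed supports cover $Q_i$, and on the associated enrichment polynomials. Thus $v_\sigma|_{Q_i}=\sum_{j\in\mathcal{J}_i}\sum_{|\alpha|\leq P}c_{j,\alpha}\varphi_j g_{j,\alpha}$. Pulling back via $\Phi_i(\hat x)=x_i+\sigma\hat x$ and using the product/translation-invariant structure noted in \cref{rmk:phiref}, each $(\varphi_j g_{j,\alpha})\circ\Phi_i$ equals a corresponding reference function $\hat\varphi_m\hat g_{m,\alpha}$ on $\hat Q$ that depends only on the local index $m$ (the corner of $\hat Q$ corresponding to $x_j$) and not on $i$ or $\sigma$. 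Hence $\hat v:=v_\sigma\circ\Phi_i$ belongs to the fixed, finite-dimensional reference space
\begin{equation}
\hat V^P:=\Span\bigl\lbrace\hat\varphi_m\hat g_{m,\alpha}\,\bigr|\,m\in\lbrace 0,1\rbrace^d,\ |\alpha|\leq P\bigr\rbrace\subset C^\infty(\hat Q).
\end{equation}

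Second, since $\dim\hat V^P<\infty$ and $\Vert\cdot\Vert_{W^{k,p}(\hat Q)}$ is a genuine norm on $\hat V^P$ (it contains the $L^p$ term), the equivalence of norms on finite-dimensional spaces yields a constant $\hat C=\hat C(P,d,k,l,p)$ with
\begin{equation}
\Vert\hat v\Vert_{W^{l,p}(\hat Q)}\leq\hat C\,\Vert\hat v\Vert_{W^{k,p}(\hat Q)}\qquad\forall\hat v\in\hat V^P.
\end{equation}

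Third, I would translate this back to $Q_i$ using the standard affine scaling relations: for every multi-index $\beta$, $\Vert\pd^\beta v_\sigma\Vert_{L^p(Q_i)}^p=\sigma^{d-p|\beta|}\Vert\hat\pd^\beta\hat v\Vert_{L^p(\hat Q)}^p$ when $p<\infty$ (with the obvious $\max$-formulation for $p=\infty$). Summing over $|\beta|\leq l$ on the left and $|\beta|\leq k$ on the right, the highest-order terms dominate as $\sigma\to 0$ and give $\Vert v_\sigma\Vert_{W^{l,p}(Q_i)}\leq C\sigma^{k-l}\Vert v_\sigma\Vert_{W^{k,p}(Q_i)}$, which is the claim; because $\hat C$ is independent of $\sigma$ and of the element index $i$, so is the resulting constant $C$.

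The only real subtlety, and thus the place I would be most careful, is justifying that the reference space $\hat V^P$ is genuinely the same for every element $Q_i\subset\Omega$: this is where the Cartesian structure and the translation/dilation invariance of the construction $\varphi_i(x)=\phiref((x-x_i)/\sigma)$ from \cref{rmk:phiref} are essential. Once that is pinned down, the scaling bookkeeping between $W^{k,p}(Q_i)$ and $W^{k,p}(\hat Q)$ is routine. (Note also that the hypothesis $Q_i\subset\Omega$ is used precisely so that restriction to $\Omega$ plays no role and the full reference space is available; cut elements are handled separately via the stabilization introduced later.)
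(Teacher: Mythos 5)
Your overall strategy---pull back to a fixed reference space, invoke norm equivalence of finite-dimensional spaces there, and scale---is precisely the argument the paper alludes to (it offers no proof of its own beyond citing Brenner--Scott), and your first two steps, including the identification of the $\sigma$- and $i$-independent reference space $\hat V^P$, are correct. The gap is in the final bookkeeping step when $k\geq 1$. From $\Vert\pd^\beta v_\sigma\Vert_{L^p(Q_i)}^p=\sigma^{d-p|\beta|}\Vert\hat\pd^\beta\hat v\Vert_{L^p(\hat Q)}^p$, the conversion factor $\sigma^{p|\beta|-d}$ that takes you from $\Vert\pd^\beta v_\sigma\Vert_{L^p(Q_i)}$ back to $\Vert\hat\pd^\beta\hat v\Vert_{L^p(\hat Q)}$ is, for $\sigma<1$, \emph{largest} at $|\beta|=0$, not at $|\beta|=k$. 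Hence the best your chain of inequalities delivers is
\begin{equation*}
\Vert v_\sigma\Vert_{W^{l,p}(Q_i)}^p\leq\sigma^{d-pl}\,\Vert\hat v\Vert_{W^{l,p}(\hat Q)}^p
\leq\hat C^p\sigma^{d-pl}\,\Vert\hat v\Vert_{W^{k,p}(\hat Q)}^p
\leq\hat C^p\sigma^{d-pl}\cdot\sigma^{-d}\,\Vert v_\sigma\Vert_{W^{k,p}(Q_i)}^p,
\end{equation*}
i.e.\ the factor $C\sigma^{-l}$, which is weaker than the claimed $C\sigma^{k-l}$ by $\sigma^{-k}$. If you instead try to retain only the top-order terms $|\beta|=k$ on the right, you are left needing $\Vert\hat v\Vert_{W^{l,p}(\hat Q)}\leq C\vert\hat v\vert_{W^{k,p}(\hat Q)}$ with a \emph{seminorm} on the right, which is false on $\hat V^P$: the constants lie in $\hat V^P$ (since $\sum_m\hat\varphi_m\equiv 1$) and are annihilated by $\vert\cdot\vert_{W^{k,p}}$ for $k\geq 1$. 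So for $k\geq 1$ the stated power is not obtained.

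The repair is standard and stays inside your framework. Your argument is already complete for $k=0$, giving $\Vert v_\sigma\Vert_{W^{j,p}(Q_i)}\leq C\sigma^{-j}\Vert v_\sigma\Vert_{L^p(Q_i)}$. For $k\geq 1$ and a multi-index $\beta$ with $k<|\beta|=j\leq l$, split $\beta=\gamma+\delta$ with $|\gamma|=k$ and observe that $\pd^\gamma v_\sigma$ pulls back, up to the factor $\sigma^{-k}$, into the \emph{fixed} finite-dimensional space $\hat\pd^\gamma\hat V^P\subset C^\infty(\clos\hat Q)$; norm equivalence there plus the same scaling yields $\Vert\pd^\delta(\pd^\gamma v_\sigma)\Vert_{L^p(Q_i)}\leq C\sigma^{-(j-k)}\Vert\pd^\gamma v_\sigma\Vert_{L^p(Q_i)}\leq C\sigma^{k-l}\Vert v_\sigma\Vert_{W^{k,p}(Q_i)}$ for $\sigma\leq 1$, while the seminorms of order $j\leq k$ are trivially bounded by $\sigma^{k-l}\Vert v_\sigma\Vert_{W^{k,p}(Q_i)}$ since $\sigma^{k-l}\geq 1$. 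Summing over $\beta$ gives the lemma. Two mitigating remarks: the instances of the lemma actually invoked later in the paper all appear to be of the form $k=0$, for which your proof suffices; and, as with any inverse estimate, one tacitly assumes $\sigma$ bounded above, which your ``as $\sigma\to 0$'' phrasing already presupposes.
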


\section{Stabilized Variational Formulation}\label{sec:formulation}
In this section we will introduce a stabilized variational formulation with the
aim of mimicking of the $L^2(\Omega)$-orthogonal projector onto $V_\sigma^P(\Omega)$.
As the inverse estimates~\cref{eqn:investimates} are not available for elements
$Q_i$ cut by the boundary $\pd\Omega$, we will employ a fictitious domain
approach. In order to ensure coercivity of the resulting bilinear form on the
entire fictitious domain, we will add a stabilization term in the cut cells. Once
consistency and stability of this formulation have been established, we will
model the regularization process as a perturbation to this variational problem.

\subsection{Basic Definitions and Conditions}
We will restrict ourselves to Hilbert spaces ($p=2$), due to the rich theoretical
framework available for this case. We will assume that the shape of the domain
$\Omega$ is such that we may apply the Stein extension
theorem~\cite[Section~5.24]{adams2003}, i.\,e., there exists a bounded linear
extension operator $\extend: W^{k,2}(\Omega)\to W^{k,2}(\mathbb{R}^d)$ for
any natural number $k$. We explicitly wish to include functions that do not
vanish on the boundary $\pd\Omega$. For this reason, for any domain
$\square\subset\mathbb{R}^d$, we will denote by $W^{-k,2}(\square):=
W^{k,2}(\square)^\prime$ the dual space of $W^{k,2}(\square)$. (Opposed to the
convention $W^{-k,2}(\square)=W_0^{k,2}(\square)^\prime$).

We will need certain geometrical definitions. To this end, let $\sigma>0$
be arbitrary but fixed. We define the fictitious domain $\Omega_\sigma$ as the
union of all elements that intersect the physical domain $\Omega$. Furthermore
we define cut and uncut elements $\Omega_\sigma^\Gamma$ and 
$\Omega_\sigma^\circ$, respectively:
\begin{equation}
\begin{split}
\Omega_\sigma        &:= \inte\ \bigcup\bigl\lbrace \clos{Q_i}\,\bigr|\,
\meas_d(Q_i\cap\Omega)>0\bigr\rbrace,\\
\Omega_\sigma^\Gamma &:= \inte\ \bigcup\bigl\lbrace \clos{Q_i}\,\bigr|\,
Q_i\in\Omega_\sigma\wedge Q_i\not\subset\Omega\bigr\rbrace,\\
\Omega_\sigma^\circ &:= \inte\ \bigcup\bigl\lbrace \clos{Q_i}\,\bigr|\,
Q_i\in\Omega_\sigma\wedge Q_i\subset\Omega\bigr\rbrace,
\end{split}
\end{equation}
with $Q_i$ as in \cref{eqn:elements}. Here, we write under a slight abuse of
notation $Q_i\in\Omega_\sigma^\Gamma$ if $Q_i\subset\Omega_\sigma^\Gamma$. These
domains obviously fulfill $\Omega_\sigma^\circ\subset\Omega\subset\Omega_\sigma$,
$\Omega_\sigma = \inte(\clos\Omega_\sigma^\circ\cup\clos\Omega_\sigma^\Gamma)$,
and $\Omega_\sigma^\circ\cap\Omega_\sigma^\Gamma=\emptyset$. Two elements $Q_i$ and
$Q_i'$ will be called neighbors if they share at least one node on the Cartesian
grid. We will make the following somewhat technical assumption: for every
$Q_i\in\Omega_\sigma^\Gamma$ there is a finite sequence of elements $(Q_i=Q_{i,1},
Q_{i,2},\ldots,Q_{i,K})\subset\Omega_\sigma^\Gamma$ with the following properties:
the number $K$ is bounded independent of $\sigma$, every pair of two subsequent
elements are neighbors, and $Q_{i,K}$ has a neighbor in $\Omega_\sigma^\circ$.
This condition means that one can always reach uncut elements from cut elements
in a bounded number of steps. For sufficiently fine Cartesian grids this 
condition is often fulfilled with $K=1$; if necessary it can be enforced by moving
additional elements from $\Omega_\sigma^\circ$ to $\Omega_\sigma^\Gamma$.

\subsection{Introduction of a Higher-order Stabilization Term}
The basic idea of the ghost penalty method is to control the norm of cut elements
by relating them to neighboring uncut elements. In the aforementioned articles~%
\cite{burman2012,hansbo2013,kirchhart2016}, for example, this is done by
controlling the norms of the gradient-jumps at element boundaries. However,
as our PUFEM spaces are globally smooth, they do not contain such jumps. 
Burman and Fern\'andez~\cite{burman2014} and Cattaneo et al.~\cite{cattaneo2015}
instead use the Brezzi--Pitk\"aranta stabilization~\cite{brezzi1984}. We will use
a higher-order variant of this idea and define the following bilinear form:
\begin{equation}
j(u_\sigma,v_\sigma) :=
\sigma^{2(P+1)}\sum_{Q_i\in \Omega_\sigma^\Gamma}\sum_{|\alpha|=P+1}
\idx{Q_i}{}{(\pd^\alpha u_\sigma)(\pd^\alpha v_\sigma)}{x},
\end{equation}
such that $j(u_\sigma,u_\sigma) = \sigma^{2(P+1)}|u_\sigma|_{W^{P+1,2}
(\Omega_\sigma^\Gamma)}^2$. We then obtain the following result:
\begin{lemma}
Let $u_\sigma\in V_\sigma^P(\Omega_\sigma)$. One then has with constants $c$
and $C$ independent of $\sigma$ and the position of $\pd\Omega$ relative to the
Cartesian grid:
\begin{equation}\label{eqn:jestimate}
c\Vert u_\sigma\Vert^2_{L^2(\Omega_\sigma)} \leq
\Vert u_\sigma\Vert^2_{L^2(\Omega_\sigma^\circ)} + j(u_\sigma,u_\sigma) \leq
C\Vert u_\sigma\Vert^2_{L^2(\Omega_\sigma)}.
\end{equation}
\end{lemma}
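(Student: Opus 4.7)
The plan is to prove the two inequalities separately. The upper bound is essentially a direct consequence of the inverse estimates from \cref{lem:investimates}: since every $Q_i \in \Omega_\sigma^\Gamma$ is a full Cartesian element (whether intersected by $\pd\Omega$ or not), the inverse estimates apply on it, so that
\begin{equation*}
\sigma^{2(P+1)}|u_\sigma|_{W^{P+1,2}(Q_i)}^2 \leq C\Vert u_\sigma\Vert_{L^2(Q_i)}^2.
\end{equation*}
Summing this over $\Omega_\sigma^\Gamma$ bounds $j(u_\sigma,u_\sigma)$ by $C\Vert u_\sigma\Vert_{L^2(\Omega_\sigma^\Gamma)}^2$, and the trivial bound $\Vert u_\sigma\Vert_{L^2(\Omega_\sigma^\circ)}^2 \leq \Vert u_\sigma\Vert_{L^2(\Omega_\sigma)}^2$ combined with $\Omega_\sigma = \Omega_\sigma^\circ \cup \Omega_\sigma^\Gamma$ (disjoint interiors) finishes this direction.

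The lower bound is the substantial part. The key intermediate step is a \emph{two-element propagation lemma}: for any two neighbouring elements $Q_a, Q_b$ (sharing at least one grid node), I claim
\begin{equation*}
\Vert u_\sigma\Vert_{L^2(Q_a)} \leq C\bigl(\Vert u_\sigma\Vert_{L^2(Q_b)} + \sigma^{P+1}|u_\sigma|_{W^{P+1,2}(Q_a\cup Q_b)}\bigr),
\end{equation*}
with $C$ independent of $\sigma$. To prove it, I would apply the Bramble--Hilbert lemma on the connected, shape-regular domain $Q_a\cup Q_b$ to obtain a polynomial $p\in V^P$ with $\Vert u_\sigma - p\Vert_{L^2(Q_a\cup Q_b)} \leq C\sigma^{P+1}|u_\sigma|_{W^{P+1,2}(Q_a\cup Q_b)}$. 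The triangle inequality then reduces matters to bounding $\Vert p\Vert_{L^2(Q_a)}$ by $\Vert p\Vert_{L^2(Q_b)}$, which follows from norm equivalence on the finite-dimensional polynomial space $V^P$: after pulling back to a reference configuration, the relative positions of two neighbouring unit cubes fall into finitely many cases, giving a uniform constant.

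Given this lemma, the strategy for the lower bound is to iterate it along the paths provided by the geometric assumption. For a fixed cut element $Q_i\in\Omega_\sigma^\Gamma$, let $(Q_{i,1},\ldots,Q_{i,K})$ be the path with $Q_{i,1}=Q_i$ and let $Q^\circ\in\Omega_\sigma^\circ$ be the uncut neighbour of $Q_{i,K}$. Applying the two-element estimate $K$ times along the chain $Q_{i,1}\to Q_{i,2}\to\cdots\to Q_{i,K}\to Q^\circ$, and using that $K$ is bounded uniformly, I obtain
\begin{equation*}
\Vert u_\sigma\Vert_{L^2(Q_i)}^2 \leq C\bigl(\Vert u_\sigma\Vert_{L^2(Q^\circ)}^2 + \sigma^{2(P+1)}|u_\sigma|_{W^{P+1,2}(\mathcal{N}(Q_i))}^2 + \sigma^{2(P+1)}|u_\sigma|_{W^{P+1,2}(Q^\circ)}^2\bigr),
\end{equation*}
where $\mathcal{N}(Q_i)$ denotes the union of the path elements (all in $\Omega_\sigma^\Gamma$). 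The last term, which lives on the uncut element $Q^\circ$ and is therefore \emph{not} controlled by $j$, is absorbed into $\Vert u_\sigma\Vert_{L^2(Q^\circ)}^2$ via the inverse estimate of \cref{lem:investimates}.

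Finally, I would sum over all $Q_i\in\Omega_\sigma^\Gamma$. Because the path length is bounded by $K$ and the Cartesian grid is locally finite, each uncut element $Q^\circ$ and each cut path element appears as a target only for a uniformly bounded number of source cut elements; this keeps the overlap constant independent of $\sigma$. Summation then yields $\Vert u_\sigma\Vert_{L^2(\Omega_\sigma^\Gamma)}^2 \leq C\bigl(\Vert u_\sigma\Vert_{L^2(\Omega_\sigma^\circ)}^2 + j(u_\sigma,u_\sigma)\bigr)$, and adding $\Vert u_\sigma\Vert_{L^2(\Omega_\sigma^\circ)}^2$ to both sides gives the claim. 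The main obstacle is the two-element propagation lemma, specifically the uniform control of the polynomial norm ratio $\Vert p\Vert_{L^2(Q_a)}/\Vert p\Vert_{L^2(Q_b)}$ independently of the cut position of $\pd\Omega$; this is what the ghost-penalty term $j$ is designed to make possible.
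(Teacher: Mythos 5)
Your overall architecture matches the paper's: the upper bound follows from the inverse estimates of \cref{lem:investimates} on full elements, and the lower bound is obtained by propagating the $L^2$-norm from a cut element to an uncut one along the bounded chains guaranteed by the geometric assumption, then summing with a bounded-overlap argument. Your observation that the terminal step produces a term $\sigma^{2(P+1)}\vert u_\sigma\vert_{W^{P+1,2}(Q^\circ)}^2$ on the uncut element, which is not part of $j$ and must be absorbed into $\Vert u_\sigma\Vert_{L^2(Q^\circ)}^2$ via \cref{eqn:investimates}, is correct; the paper sidesteps this by placing the seminorm of its two-element norm $\Vert\cdot\Vert_*$ only on the cut element of each pair.

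The gap is in your proof of the two-element propagation lemma. The paper defines neighbors as elements sharing \emph{at least one grid node}, and the generic configuration (see \cref{fig:stablem}) is two elements touching only at a corner $x^*$; in three dimensions one also has edge-sharing neighbors. In these cases $Q_a\cup Q_b$ is a \emph{disconnected} open set, not a \enquote{connected, shape-regular} Bramble--Hilbert domain, and the estimate $\inf_{p}\Vert u-p\Vert_{L^2(Q_a\cup Q_b)}\leq C\sigma^{P+1}\vert u\vert_{W^{P+1,2}(Q_a\cup Q_b)}$ is false for general $u$ in the (broken) Sobolev space of that set: take $u$ equal to two different polynomials of degree at most $P$ on the two components, so that the right-hand side vanishes while the left does not. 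This is exactly the paper's counterexample ($v_\sigma=0$ on $Q_i'$, $v_\sigma=(x-x^*)\cdot y$ on $Q_i$) demonstrating that the stabilization fails for standard $C^0$ elements. What rescues the estimate is the global $C^\infty$ smoothness of $V_\sigma^P$: all derivatives of $u_\sigma$ at the shared node $x^*$ are determined from either side, which forces the two local polynomials to coincide. The paper makes this precise not through Bramble--Hilbert but by showing that $\Vert\hat v\Vert_*^2:=\Vert\hat v\Vert_{L^2(\hat Q')}^2+\vert\hat v\vert_{W^{P+1,2}(\hat Q)}^2$ is a norm on the finite-dimensional reference space $V_1^P(\hat Q\cup\hat Q')$, then invoking norm equivalence and scaling over the finitely many reference configurations. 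Relatedly, you identify the \enquote{main obstacle} as the polynomial norm ratio $\Vert p\Vert_{L^2(Q_a)}/\Vert p\Vert_{L^2(Q_b)}$, but that step is routine norm equivalence on fixed reference cubes and is independent of where $\pd\Omega$ cuts (both $Q_a$ and $Q_b$ are full elements); the genuine difficulty, and the reason the lemma is specific to smooth PUFEM spaces, lies precisely in the approximation step you delegated to Bramble--Hilbert.
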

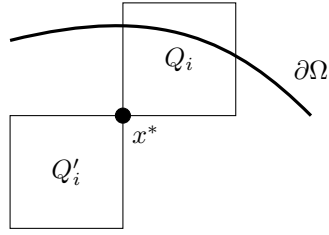
\begin{figure}
\centering
\begin{tikzpicture}
\draw (0,0) rectangle node {$Q_i'$}  (1.5,1.5);
\draw (1.5,1.5) rectangle node {$Q_i$} (3,3);
\filldraw (1.5,1.5) circle [radius=0.1] node [anchor=north west] {$x^*$};
\draw [very thick] (0,2.5) .. controls (2,3) and (3,2.5) .. (4,1.5) 
      node [above=10pt] {$\pd\Omega$};
\end{tikzpicture}
\caption{\label{fig:stablem}A cut element $Q_i\in\Omega_\sigma^\Gamma$
sharing a node $x^*$ with an uncut element~$Q_i'\in\Omega_\sigma^\circ$.}
\end{figure}
Before moving on to the proof of this lemma, let us remark that the stabilization
term is necessary. Look for example at the configuration shown in~\cref{fig:stablem}.
The partition of unity function corresponding to the node of $Q_i$ opposite to $x^*$
vanishes on $\Omega_\sigma^\circ$. Thus its $L^2(\Omega_\sigma)$-norm cannot be
controlled by looking at $\Omega_\sigma^\circ$ only, unless one adds a stabilization
term.
\begin{proof}
The second inequality directly follows from the inverse inequalities~%
\cref{eqn:investimates}. For the first inequality, let us first consider
the case $K=1$, i.\,e., a cut element $Q_i\in\Omega_\sigma^\Gamma$ and an
associated uncut element $Q_i'\in\Omega_\sigma^\circ$ which share a Cartesian
grid point $x^*$, as for example illustrated in \cref{fig:stablem}. This
configuration can be mapped to one of $P^{2^d}_2 = 4^d-2^d$ reference cases 
with reference elements $\hat{Q}$ and $\hat{Q}'$ using the transformation
$\hat{x}=\Phi^{-1}(x):=(x-x^*)/\sigma$, such that $\hat{x}^*=0$. For an
arbitrary function
$v_\sigma\in V_\sigma^P(Q_i\cup Q_i')$ one obtains with $\hat{v} :=
(v_\sigma\circ\Phi)\in V_1^P(\hat{Q}\cup\hat{Q}')$:
\begin{equation}
\Vert v_\sigma\Vert_{L^2(Q_i\cup Q_i')}^{2} = 
\sigma^d\Vert\hat{v}\Vert_{L^2(\hat{Q}\cup\hat{Q}')}^2.
\end{equation}
We claim that the following expression constitutes a norm on $V_1^P(\hat{Q}\cup\hat{Q}')$:
\begin{equation}
\Vert\hat{v}\Vert_*^2 := \Vert\hat{v}\Vert_{L^2(\hat{Q}')}^2 +
\vert\hat{v}\vert_{W^{P+1,2}(\hat{Q})}^2.
\end{equation}

It suffices to show that $\Vert\hat{v}\Vert_{*} = 0 \Longrightarrow\hat{v}= 0.$
From $\Vert\hat{v}\Vert_{L^2(\hat{Q}')}^2=0$ we obtain $\hat{v}\equiv 0$ on
$\hat{Q}'$ and due to the global smoothness of $\hat{v}$ also $\pd^\alpha\hat{v}
(\hat{x}^*)= 0$ for all multi-indices $\alpha\in\mathbb{N}_0^d$. From
$|\hat{v}|_{W^{P+1,2}(\hat{Q})}^2 = 0$ we obtain $\pd^\alpha\hat{v}\equiv 0,
|\alpha| = P + 1$ on $\hat{Q}$. Together with $\pd^\alpha\hat{v}(\hat{x}^*)=0$
this implies $\hat{v}\equiv 0$ on $\hat{Q}$ as well. Thus $\Vert\cdot\Vert_*$
is indeed a norm. After employing the norm-equivalence of finite-dimensional spaces,
we can transform back to $Q_i\cup Q_i'$ and obtain:
\begin{equation}
 \sigma^d\Vert\hat{v}\Vert_{L^2(\hat{Q}\cup\hat{Q}')}^2 \leq
C\sigma^d\Vert\hat{v}\Vert_*^2 = 
C\bigl(\Vert v_\sigma\Vert_{L^2(Q_i')}^{2} + \sigma^{2(P+1)}\vert v_\sigma\vert_{W^{P+1,2}(Q_i)}^{2}\bigr).
\end{equation}
The case $K>1$ with sequences of cells $(Q_i = Q_{i,1}, Q_{i,2},\ldots, Q_{i,K} = Q_i')$
follows by induction. Now, summing over all elements and using the finite overlap condition
$M=2^d$, the claim follows.
\end{proof}
Note that the proof crucially depends on the global smoothness of the spaces
$V_\sigma^P$. In particular, this stabilization does not work with the
conventional finite element spaces. As an example consider the case depicted
in \cref{fig:stablem}, and set $v_\sigma:=0$ on $Q_i'$, and $v_\sigma:=(x - x^*)\cdot y$
on $Q_i$, where $y\in\mathbb{R}^d$ is an arbitrary non-zero vector. 

\subsection{Stability and Convergence}
As described before, we are aiming to mimic the $L^2(\Omega)$-orthogonal
projector. To this end, we introduce the bilinear forms $a$ and $A$:
\begin{align}
a:&\ L^2(\Omega_\sigma)\times L^2(\Omega_\sigma)\to \mathbb{R},\quad
&(u,v)&\mapsto\idx{\Omega}{}{uv}{x},\\
A:&\ W^{P+1,2}(\Omega_\sigma)\times W^{P+1,2}(\Omega_\sigma)\to\mathbb{R},\quad
&(u_\sigma,v_\sigma)&\mapsto a(u_\sigma,v_\sigma) + \varepsilon j(u_\sigma,v_\sigma),
\label{eqn:bilinear_form_A}
\end{align}
where $\varepsilon>0$ denotes a user-defined stabilization parameter. We define
the variational problem as: given any $u$ for which the following makes sense,
find $u_\sigma\in V_\sigma^P(\Omega_\sigma)$ such that:
\begin{equation}\label{eqn:varprob}
A(u_\sigma,v_\sigma) = \idx{\Omega}{}{uv_\sigma}{x} \qquad
\forall v_\sigma\in V_\sigma^P(\Omega_\sigma).
\end{equation}
We then obtain the following two results.
\begin{theorem}[Stability]\label{thm:stability}
The bilinear form $A$ from \cref{eqn:bilinear_form_A} fulfills with a constant
$C(\varepsilon)$ independent of $\sigma$, $u_\sigma$, and the position of $\pd\Omega$
relative to the grid:
\begin{equation}
A(u_\sigma,u_\sigma)\geq C(\varepsilon)\Vert u_\sigma\Vert_{L^2(\Omega_\sigma)}^2\quad
\forall u_\sigma\in V_\sigma^P(\Omega_\sigma).
\end{equation}
\end{theorem}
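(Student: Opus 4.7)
The plan is to exploit the previous lemma directly. First I would unfold the definition of $A$ to write
\begin{equation*}
A(u_\sigma,u_\sigma) = \Vert u_\sigma\Vert_{L^2(\Omega)}^2 + \varepsilon\, j(u_\sigma,u_\sigma).
\end{equation*}
Since $\Omega_\sigma^\circ\subset\Omega$ by construction, the $L^2(\Omega)$-norm dominates $\Vert u_\sigma\Vert_{L^2(\Omega_\sigma^\circ)}$, so I can discard the contribution on $\Omega\setminus\Omega_\sigma^\circ$ and obtain
\begin{equation*}
A(u_\sigma,u_\sigma) \geq \Vert u_\sigma\Vert_{L^2(\Omega_\sigma^\circ)}^2 + \varepsilon\, j(u_\sigma,u_\sigma).
\end{equation*}

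Next I would factor out $\min(1,\varepsilon)$ from the right-hand side to recover the precise combination that appears in the stabilization estimate \cref{eqn:jestimate}:
\begin{equation*}
A(u_\sigma,u_\sigma) \geq \min(1,\varepsilon)\Bigl(\Vert u_\sigma\Vert_{L^2(\Omega_\sigma^\circ)}^2 + j(u_\sigma,u_\sigma)\Bigr).
\end{equation*}
Applying the lower bound of \cref{eqn:jestimate} to the bracket then yields $A(u_\sigma,u_\sigma)\geq c\min(1,\varepsilon)\Vert u_\sigma\Vert_{L^2(\Omega_\sigma)}^2$, which is the claim with $C(\varepsilon) := c\min(1,\varepsilon)$. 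Crucially, the constant $c$ from \cref{eqn:jestimate} is independent of $\sigma$ and of the relative position of $\pd\Omega$, so the same holds for $C(\varepsilon)$.

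There is really no hard part here: once the lemma on $j$ has been established, stability is a one-line consequence, and my write-up would essentially be the three displays above. The only tiny subtlety worth mentioning is the role of $\varepsilon$; if one insisted on a constant linear in $\varepsilon$ rather than $\min(1,\varepsilon)$, one would lose the sharp dependence for $\varepsilon\geq 1$, but for the purposes of the theorem, the $\min(1,\varepsilon)$ bound is both clean and sufficient.
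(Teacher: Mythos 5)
Your proof is correct and follows essentially the same route as the paper: drop down from $\Omega$ to $\Omega_\sigma^\circ$, then invoke the lower bound of \cref{eqn:jestimate}. The paper leaves the $\varepsilon$-dependence implicit in $C(\varepsilon)$, whereas you make it explicit as $c\min(1,\varepsilon)$, which is a harmless refinement.
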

\begin{proof}
For any $u_\sigma\in V_\sigma^P(\Omega_\sigma)$ one has with the help of
\cref{eqn:jestimate}:
\begin{multline}
A(u_\sigma,u_\sigma) =
\Vert u_\sigma\Vert_{L^2(\Omega)}^2 + \varepsilon j(u_\sigma,u_\sigma) \geq \\
\Vert u_\sigma\Vert_{L^2(\Omega_\sigma^\circ)}^2 + \varepsilon j(u_\sigma,u_\sigma)
\geq C(\varepsilon)\Vert u_\sigma\Vert_{L^2(\Omega_\sigma)}^2.
\end{multline} 
\end{proof}
\begin{theorem}[Convergence]\label{thm:convergence}
Let $u\in W^{P+1,2}(\Omega_\sigma)$. The solution $u_\sigma\in V_\sigma^P(\Omega_\sigma)$
of the variational problem \cref{eqn:varprob} then satisfies the following error
bound:
\begin{equation}
\Vert u_\sigma - u\Vert_{L^2(\Omega_\sigma)} \leq C(\varepsilon)
\sigma^{P+1}\Vert u\Vert_{W^{P+1,2}(\Omega_\sigma)},
\end{equation}
where the constant $C(\varepsilon)$ is independent of $\sigma$, $u$, and how the
boundary $\pd\Omega$ intersects the grid.
\end{theorem}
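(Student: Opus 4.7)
The plan is to follow a Strang-lemma style argument, splitting the error $u_\sigma - u$ into an approximation part and a discrete error part and using \cref{thm:approx} for the first together with \cref{thm:stability} for the second. Because the stabilization term $\varepsilon j(\cdot,\cdot)$ is absent on the continuous side, $A$ is \emph{not} consistent with the right-hand side, so the remainder of the argument is really about showing that the stabilization contributes only a term of optimal order.

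First I would let $\project u\in V_\sigma^P(\Omega_\sigma)$ be the quasi-interpolant furnished by \cref{thm:approx} applied on the fictitious domain $\Omega_\sigma$, so that
\begin{equation}
\Vert u-\project u\Vert_{L^2(\Omega_\sigma)}\leq C\sigma^{P+1}\vert u\vert_{W^{P+1,2}(\Omega_\sigma)},
\qquad
\vert\project u\vert_{W^{P+1,2}(\Omega_\sigma)}\leq C\vert u\vert_{W^{P+1,2}(\Omega_\sigma)},
\end{equation}
the second estimate following from $\vert\project u - u\vert_{W^{P+1,2}(\Omega_\sigma)}\leq C\vert u\vert_{W^{P+1,2}(\Omega_\sigma)}$ (the $k=P+1$ case of \cref{thm:approx}) together with the triangle inequality. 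Set $e_\sigma:=u_\sigma-\project u\in V_\sigma^P(\Omega_\sigma)$. Since the triangle inequality reduces the claim to an estimate on $\Vert e_\sigma\Vert_{L^2(\Omega_\sigma)}$, the remaining work is to bound this quantity.

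By \cref{thm:stability} we have $C(\varepsilon)\Vert e_\sigma\Vert_{L^2(\Omega_\sigma)}^2\leq A(e_\sigma,e_\sigma)$, and using \cref{eqn:varprob} this becomes
\begin{equation}
A(e_\sigma,e_\sigma)=\idx{\Omega}{}{(u-\project u)e_\sigma}{x}-\varepsilon j(\project u,e_\sigma).
\end{equation}
The first term is controlled by Cauchy--Schwarz and the approximation estimate above. For the stabilization term I would apply Cauchy--Schwarz with respect to $j(\cdot,\cdot)$ and then exploit that every $Q_i\in\Omega_\sigma^\Gamma$ is completely contained in $\Omega_\sigma$, so the inverse estimate \cref{lem:investimates} yields
\begin{equation}
\sigma^{P+1}\vert e_\sigma\vert_{W^{P+1,2}(\Omega_\sigma^\Gamma)}\leq C\Vert e_\sigma\Vert_{L^2(\Omega_\sigma)},
\end{equation}
while the consistency term is handled by $\sigma^{P+1}\vert\project u\vert_{W^{P+1,2}(\Omega_\sigma^\Gamma)}\leq C\sigma^{P+1}\vert u\vert_{W^{P+1,2}(\Omega_\sigma)}$. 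Combining, dividing by $\Vert e_\sigma\Vert_{L^2(\Omega_\sigma)}$, and absorbing into $C(\varepsilon)$ gives the desired $O(\sigma^{P+1})$ bound on $e_\sigma$.

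The main obstacle is the non-consistency induced by $j$: one needs that applying $j$ to the (smooth) target function $u$ still produces a contribution of order $\sigma^{P+1}$, which is where the definition $j(u,u)=\sigma^{2(P+1)}\vert u\vert_{W^{P+1,2}(\Omega_\sigma^\Gamma)}^2$ was tailored precisely so that the resulting factor of $\sigma^{P+1}$ matches the approximation order. Everything else — the stability step, the inverse estimate on $e_\sigma$, and the triangle inequality finalization — is essentially mechanical once this term has been identified as the critical quantity.
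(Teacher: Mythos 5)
Your proposal is correct and follows essentially the same route as the paper's own proof: the same splitting via the quasi-interpolant $\project u$ from \cref{thm:approx}, coercivity from \cref{thm:stability} applied to $u_\sigma-\project u$, Galerkin orthogonality against \cref{eqn:varprob}, and the treatment of the non-consistent stabilization term by Cauchy--Schwarz in $j$, the inverse estimates \cref{eqn:investimates}, and the $W^{P+1,2}$-stability of $\project$. No substantive differences.
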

\begin{proof}
According to \cref{thm:approx}, there exists $\project u\in V_\sigma^P(\Omega_\sigma)$
such that:
\begin{equation}\label{eqn:approx2}
\vert\project u-u\vert_{W^{k,2}(\Omega_\sigma)}\leq C\sigma^{P+1-k}
\vert u\vert_{W^{P+1,2}(\Omega_\sigma)}\qquad k\in\mathbb{N}_0,\ k\leq P + 1.
\end{equation}
We may write:
\begin{equation}
\Vert u_\sigma - u\Vert_{L^2(\Omega_\sigma)} \leq
\Vert u_\sigma - \project u\Vert_{L^2(\Omega_\sigma)} +
\Vert \project u - u\Vert_{L^2(\Omega_\sigma)}.
\end{equation}
For the second term we can apply relation~\cref{eqn:approx2}. For the first term
we obtain with \cref{thm:stability} and the fact that $u_\sigma$ solves~\cref{eqn:varprob}:
\begin{multline}
\Vert \project u - u_\sigma\Vert_{L^2(\Omega_\sigma)}^2 \leq C(\varepsilon)
A(\project u-u_\sigma,\project u-u_\sigma) = \\
C(\varepsilon)\biggl(\bigl(\project u-u,\project u-u_\sigma\bigr)_{L^2(\Omega)} + 
\varepsilon j(\project u,\project u-u_\sigma)\biggr) \leq \\
C(\varepsilon)\biggl(
\Vert\project u-u\Vert_{L^2(\Omega_\sigma)}\Vert\project u-u_\sigma\Vert_{L^2(\Omega_\sigma)} +
\varepsilon j(\project u,\project u)^{1/2}j(\project u-u_\sigma,\project u-u_\sigma)^{1/2}
\biggr),
\end{multline}
where we used the Cauchy--Schwarz inequality in the last step. Noting that by the
inverse estimates~\cref{eqn:investimates} we have:
\begin{equation}
j(\project u-u_\sigma,\project u-u_\sigma)^{1/2} \leq
C\Vert\project u-u_\sigma\Vert_{L^2(\Omega_\sigma)}
\end{equation}
and together with \cref{eqn:approx2}:
\begin{equation}
j(\project u,\project u)^{1/2} \leq C\sigma^{P+1}
\Vert\project u\Vert_{W^{P+1,2}(\Omega_\sigma)} \leq 
C\sigma^{P+1}
\Vert u\Vert_{W^{P+1,2}(\Omega_\sigma)}.
\end{equation}
After dividing both sides by $\Vert \project u - u_\sigma\Vert_{L^2(\Omega_\sigma)}$
we thus obtain:
\begin{equation}\label{eqn:omfg}
\Vert\project u - u_\sigma\Vert_{L^2(\Omega_\sigma)} \leq
C(\varepsilon)\biggl(\Vert\project u-u\Vert_{L^2(\Omega_\sigma)} +
\varepsilon\sigma^{P+1}\Vert u\Vert_{W^{P+1,2}(\Omega_\sigma)}\biggr).
\end{equation}
Again applying \cref{eqn:approx2} to the first term yields the claim.
\end{proof}

\subsection{Influence of the Quadrature Error}
In vortex methods we are only given a particle field, i.\,e., a quadrature rule
for integrating smooth functions against the underlying vorticity we are aiming
to approximate. Furthermore the bilinear form $A$ can usually only be computed
approximately, using numerical quadrature. In this subsection we are analyzing
the influence of these additional sources of error.

We will assume that the bilinear form $j$ can be computed exactly. This is
justified as it is sufficient to perform computations on the reference element
$\hat{Q}$, which can be done up to arbitrary precision a priori. As for the
bilinear form $a$, we will assume the availability of quadrature rules $I_m$
satisfying error bounds of the following form:
\begin{equation}
\biggl|\idx{\Omega}{}{f}{x}-I_m(f)\biggr| \leq
Ch^{m+1}\vert f\vert_{W^{m+1,1}(\Omega)} \quad f\in W^{m+1,1}(\Omega).
\end{equation}
Such error estimates typically arise from the application of quadrature rules of
exactness degree~$m$ and positive weights to the cells of a quasi-uniform triangulation
of the domain $\Omega$ of mesh-width~$h$. Note that due to the global smoothness
of the PUFEM spaces, these quadrature rules \emph{do not need to be aligned} with
the Cartesian grid. We will write $a_h(u_\sigma,v_\sigma):= I_m(u_\sigma v_\sigma)$
for the resulting approximate bilinear form. For $u_\sigma,v_\sigma\in
V_\sigma^P(\Omega_\sigma)$ one then obtains with the help of H\"{o}lder's inequality
and the inverse estimates \cref{eqn:investimates}:
\begin{multline}\label{eqn:aerror}
|a(u_\sigma,v_\sigma)-a_h(u_\sigma,v_\sigma)|          \leq 
Ch^{m+1}|u_\sigma v_\sigma|_{W^{m+1,1}(\Omega_\sigma)} \leq \\
Ch^{m+1}\sum_{|\alpha|=m+1}\sum_{\beta\leq\alpha}
\begin{pmatrix}\alpha \\ \beta\end{pmatrix}
\Vert\pd^\beta u_\sigma \pd^{\alpha-\beta}
               v_\sigma\Vert_{L^1(\Omega_\sigma)} \leq \\
Ch^{m+1}\sum_{|\alpha|=m+1}\sum_{\beta\leq\alpha}
\begin{pmatrix}\alpha \\ \beta\end{pmatrix}
\Vert\pd^\beta u_\sigma\Vert_{L^2(\Omega_\sigma)}
\Vert\pd^{\alpha-\beta} v_\sigma\Vert_{L^2(\Omega_\sigma)} \leq \\
Ch^{m+1}\sigma^{-(m+1)}
\Vert u_\sigma\Vert_{L^2(\Omega_\sigma)}
\Vert v_\sigma\Vert_{L^2(\Omega_\sigma)}.
\end{multline}
This will require us to couple $h$ and $\sigma$ through a relation like
$h = \sigma^s$, for some $s>1$. We then obtain coercivity of
$A_h(u_\sigma,v_\sigma):= a_h(u_\sigma,v_\sigma) +
j(u_\sigma,v_\sigma)$:
\begin{multline}
A_h(u_\sigma,u_\sigma) = A(u_\sigma,u_\sigma)-
\bigl(a(u_\sigma,u_\sigma)-a_h(u_\sigma,u_\sigma)\bigr) \geq \\
\bigl( C(\varepsilon) - Ch^{m+1}\sigma^{-(m+1)} \bigr)
\Vert u_\sigma\Vert^2_{L^2(\Omega_\sigma)} \geq
C(\varepsilon)\Vert u_\sigma\Vert^2_{L^2(\Omega_\sigma)},
\end{multline}
where the last constant $C(\varepsilon)$ is independent of $\sigma$, $u$, and the
position of $\pd\Omega$ relative to the Cartesian grid, for $\sigma>0$ small
enough, $h=\sigma^s$, $s>1$.

For the particle field $u_h$ we will assume an error bound of the following form:
\begin{equation}\label{eqn:particleerror}
\Vert u_h - u \Vert_{W^{-(m+1),2}(\Omega)}\leq
Ch^{m+1}\Vert u\Vert_{W^{m+1,2}(\Omega)},
\end{equation}
which is the typical form arising in vortex methods~\cite{cottet2000}. Again,
the particle field does not in any way need to be aligned to the Cartesian grid.
Collecting all of the previous results, we are ready to prove the main result
of this article.
\begin{theorem}\label{thm:perturbedprob}
Let $h=\sigma^s$, $s>1$, and denote $k:=\max\lbrace P, m\rbrace$. Let
$u\in W^{k+1,2}(\Omega)$ and let the particle approximation $u_h\in W^{-(m+1),2}(\Omega)$
satisfy the error bound \cref{eqn:particleerror}. Then for $\sigma>0$ small enough the
solution $u_\sigma\in V_\sigma^P(\Omega_\sigma)$ of the perturbed variational problem:
\begin{equation}\label{eqn:perturbedprob}
A_h(u_\sigma,v_\sigma) = \langle u_h,v_\sigma\rangle\quad\forall v_\sigma\in V_\sigma^P(\Omega_\sigma)
\end{equation}
satisfies the following error bound:
\begin{equation}
\Vert u - u_\sigma\Vert_{L^2(\Omega)} \leq C(\varepsilon)
\bigl(
\sigma^{P+1} +
h^{m+1}\sigma^{-(m+1)}
\bigr)\Vert u\Vert_{W^{k+1,2}(\Omega)},
\end{equation}
where the constant $C(\varepsilon)$ is independent of $\sigma$, $u$, and the position
of $\pd\Omega$ relative to the Cartesian grid.
\end{theorem}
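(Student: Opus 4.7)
The plan is to follow a Strang-type argument that combines Theorem~\ref{thm:approx} with the perturbed coercivity established just before the theorem statement. First, I would extend $u$ to $\mathbb{R}^d$ via the Stein operator $\mathcal{E}$ and let $\mathcal{P}u\in V_\sigma^P(\Omega_\sigma)$ denote the quasi-interpolant from Theorem~\ref{thm:approx} applied on $\Omega_\sigma$. A triangle inequality then splits the error,
\begin{equation*}
\Vert u - u_\sigma\Vert_{L^2(\Omega)} \leq \Vert u - \mathcal{P}u\Vert_{L^2(\Omega_\sigma)} + \Vert \mathcal{P}u - u_\sigma\Vert_{L^2(\Omega_\sigma)},
\end{equation*}
and the first summand is immediately bounded by $C\sigma^{P+1}\Vert u\Vert_{W^{P+1,2}(\Omega)}$ by Theorem~\ref{thm:approx} and continuity of~$\mathcal{E}$.

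For the discrete error $v_\sigma := \mathcal{P}u - u_\sigma$, I would invoke the perturbed coercivity of $A_h$ derived just above the theorem (valid for $\sigma$ sufficiently small under $h=\sigma^s$, $s>1$). Using $u_\sigma$ solves~\cref{eqn:perturbedprob} and $(u,v_\sigma)_{L^2(\Omega)}=\langle u,v_\sigma\rangle$ since $u\in L^2(\Omega)$, I would arrive at
\begin{equation*}
C(\varepsilon)\Vert v_\sigma\Vert_{L^2(\Omega_\sigma)}^{2} \leq [a_h-a](\mathcal{P}u,v_\sigma) + (\mathcal{P}u - u, v_\sigma)_{L^2(\Omega)} + \varepsilon\, j(\mathcal{P}u,v_\sigma) + \langle u - u_h, v_\sigma\rangle.
\end{equation*}

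Each of the four terms is controlled by an estimate already in place. The quadrature contribution uses~\cref{eqn:aerror} together with $\Vert\mathcal{P}u\Vert_{L^2(\Omega_\sigma)} \leq C\Vert u\Vert_{W^{P+1,2}(\Omega)}$ (from Theorem~\ref{thm:approx} and the triangle inequality), giving $Ch^{m+1}\sigma^{-(m+1)}\Vert u\Vert_{W^{P+1,2}(\Omega)}\Vert v_\sigma\Vert_{L^2(\Omega_\sigma)}$. The $L^2$-consistency term is directly controlled by $C\sigma^{P+1}\Vert u\Vert_{W^{P+1,2}(\Omega)}\Vert v_\sigma\Vert_{L^2(\Omega_\sigma)}$ via Theorem~\ref{thm:approx}. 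For the stabilization term, Cauchy--Schwarz gives $\varepsilon\,j(\mathcal{P}u,v_\sigma)\leq\varepsilon\, j(\mathcal{P}u,\mathcal{P}u)^{1/2}j(v_\sigma,v_\sigma)^{1/2}$; the first factor is $\leq C\sigma^{P+1}\Vert u\Vert_{W^{P+1,2}(\Omega)}$ by Theorem~\ref{thm:approx} with $k=P+1$, and the second factor is $\leq C\Vert v_\sigma\Vert_{L^2(\Omega_\sigma)}$ by the inverse estimates~\cref{eqn:investimates}. Finally, the particle error is handled by duality paired with an inverse estimate:
\begin{equation*}
|\langle u-u_h, v_\sigma\rangle| \leq \Vert u-u_h\Vert_{W^{-(m+1),2}(\Omega)}\Vert v_\sigma\Vert_{W^{m+1,2}(\Omega)} \leq Ch^{m+1}\sigma^{-(m+1)}\Vert u\Vert_{W^{m+1,2}(\Omega)}\Vert v_\sigma\Vert_{L^2(\Omega_\sigma)}.
\end{equation*}

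Dividing by $\Vert v_\sigma\Vert_{L^2(\Omega_\sigma)}$, combining with the approximation estimate from the triangle inequality, and absorbing $\Vert u\Vert_{W^{P+1,2}(\Omega)}$ and $\Vert u\Vert_{W^{m+1,2}(\Omega)}$ into $\Vert u\Vert_{W^{k+1,2}(\Omega)}$ via $k = \max\{P, m\}$ yields the claim. The main technical subtlety is the consistent bookkeeping of norms: the quadrature term and the particle-field term each produce the factor $h^{m+1}\sigma^{-(m+1)}$, which is small only under the coupling $h = \sigma^{s}$ with $s>1$. This is precisely the same constraint that preserved the coercivity constant $C(\varepsilon)$ when passing from $A$ to $A_h$, so the two requirements are compatible and the bound remains meaningful as $\sigma\to 0$.
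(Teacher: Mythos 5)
Your proof is correct, and it takes a recognizably different route from the paper's. The paper pivots through the solution $v_\sigma=\solve\extend u$ of the \emph{unperturbed} discrete problem~\cref{eqn:varprob}: it bounds $\Vert u_\sigma-v_\sigma\Vert_{L^2(\Omega_\sigma)}$ by testing the coercivity of $A_h$ against the difference of the two right-hand sides (the particle error $\langle u_h-u,\cdot\rangle$ plus the quadrature perturbation $(A-A_h)(v_\sigma,\cdot)$), and then simply cites \cref{thm:convergence} for $\Vert v_\sigma-u\Vert_{L^2(\Omega)}$. You instead pivot through the quasi-interpolant $\project u$ and run a single Strang-lemma argument whose four terms re-derive inline the consistency and stabilization estimates that the paper has already packaged inside the proof of \cref{thm:convergence}; your bookkeeping of each term (the $[a_h-a]$ term via \cref{eqn:aerror}, the $L^2$-consistency and $j(\project u,\project u)^{1/2}\leq C\sigma^{P+1}\Vert u\Vert_{W^{P+1,2}}$ terms via \cref{thm:approx}, the duality-plus-inverse-estimate treatment of $\langle u-u_h,v_\sigma\rangle$) is exactly right, including the passage $\Vert v_\sigma\Vert_{W^{m+1,2}(\Omega)}\leq\Vert v_\sigma\Vert_{W^{m+1,2}(\Omega_\sigma)}\leq C\sigma^{-(m+1)}\Vert v_\sigma\Vert_{L^2(\Omega_\sigma)}$, which is legitimate because $\Omega_\sigma$ consists of whole elements. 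What each approach buys: yours is more self-contained and makes every source of error visible in one display, while the paper's is more modular — its intermediate estimate~\cref{eqn:lol} on $\Vert u_\sigma-\solve\extend u\Vert$ is reused verbatim in the proof of \cref{thm:optimality}, which your decomposition would not directly supply. Either way the final assembly, with $\Vert u\Vert_{W^{P+1,2}}$ and $\Vert u\Vert_{W^{m+1,2}}$ absorbed into $\Vert u\Vert_{W^{k+1,2}}$, is sound.
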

\begin{proof}
Let $v_\sigma\in V_\sigma^P(\Omega_\sigma)$ denote the solution of the unperturbed
variational problem~\cref{eqn:varprob}, with $u$ extended to $\extend u$ by the
Stein extension operator. With the help of the coercivity of $A_h$ one then obtains:
\begin{multline}
\Vert u_\sigma-v_\sigma\Vert_{L^2(\Omega_\sigma)}^2 \leq C(\varepsilon)
A_h(u_\sigma-v_\sigma,u_\sigma-v_\sigma) = \\
C(\varepsilon)\biggl(
\langle u_h-u,u_\sigma-v_\sigma\rangle + 
A(v_\sigma,u_\sigma-v_\sigma) - A_h(v_\sigma,u_\sigma-v_\sigma)
\biggr).
\end{multline}
Application of the error bounds \cref{eqn:particleerror} and \cref{eqn:aerror}
as well as the inverse estimates \cref{eqn:investimates} yields:
\begin{multline}
\Vert u_\sigma-v_\sigma\Vert_{L^2(\Omega_\sigma)}^2 \leq C(\varepsilon)
\bigl(
h^{m+1}\sigma^{-(m+1)}\Vert u\Vert_{W^{m+1,2}(\Omega)}
\Vert u_\sigma - v_\sigma\Vert_{L^2(\Omega_\sigma)} + \\
h^{m+1}\sigma^{-(m+1)}
\Vert v_\sigma\Vert_{L^2(\Omega_\sigma)}
\Vert u_\sigma - v_\sigma\Vert_{L^2(\Omega_\sigma)}
\bigr).
\end{multline}
and thus:
\begin{equation}
\Vert u_\sigma-v_\sigma\Vert_{L^2(\Omega_\sigma)} \leq
C(\varepsilon)h^{m+1}\sigma^{-(m+1)}
\bigl(
\Vert u\Vert_{W^{m+1,2}(\Omega)} +
\Vert v_\sigma\Vert_{L^2(\Omega_\sigma)}
\bigr).
\end{equation}
Nothing that
$\Vert v_\sigma\Vert_{L^2(\Omega_\sigma)}\leq 
C(\varepsilon)\Vert u\Vert_{W^{P+1,2}(\Omega)}$ we obtain:
\begin{equation}\label{eqn:lol}
\Vert u_\sigma-v_\sigma\Vert_{L^2(\Omega_\sigma)} \leq
C(\varepsilon)h^{m+1}\sigma^{-(m+1)}\Vert u\Vert_{W^{k+1,2}(\Omega)}. 
\end{equation}
Now, by the triangle inequality:
\begin{equation}
\Vert u_\sigma- u\Vert_{L^2(\Omega)} \leq
\Vert u_\sigma- v_\sigma\Vert_{L^2(\Omega)} +
\Vert v_\sigma-u\Vert_{L^2(\Omega)}
\end{equation}
the claim follows by applying~\cref{thm:convergence} and the boundedness of
$\extend$ to the second term.
\end{proof}
The part $\sigma^{P+1}\Vert u\Vert_{W^{k+1,2}(\Omega)}$ is called the smoothing
error; $\sigma$ roughly corresponds to the blob-width in conventional vortex
particle methods. The second part is called the quadrature error; choosing
$s=1+\tfrac{P+1}{m+1}$ balances both terms. In the next section we will
illustrate that the choice $P=m$  does not only \enquote{feel natural}, but
also yields optimal results in a certain sense. In this case we obtain with
$s=2$ an overall convergence rate of $\bigO{\sigma^{P+1}}=\bigO{h^{\frac{1}{2}(P+1)}}$.

\subsection{Optimality of the Smoothed Solution}
In this subsection we will assume that we can apply the bilinear form $A$ exactly,
i.\,e., $A_h = A$. Furthermore we assume $P=m$. We will show that the smoothed
solution $u_\sigma$ then satisfies the same asymptotic error bound as $u_h$. We
will need the following corollary of~\cref{thm:convergence}.
\begin{corollary}\label{thm:corollary}
The solution operator $\solve$ to the problem~\cref{eqn:varprob} is bounded:
\begin{equation}
\Vert\solve u\Vert_{W^{P+1,2}(\Omega_\sigma)}\leq C(\varepsilon)
\Vert       u\Vert_{W^{P+1,2}(\Omega_\sigma)}\qquad\forall u\in W^{P+1,2}(\Omega_\sigma).
\end{equation}
\end{corollary}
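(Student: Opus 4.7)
The plan is to use the quasi-interpolant $\project u$ from \cref{thm:approx} as an intermediary and exploit a scale cancellation between the inverse estimates and the known $L^2$-error bound. Concretely, I would write
\begin{equation*}
\Vert \solve u\Vert_{W^{P+1,2}(\Omega_\sigma)} \leq
\Vert \solve u - \project u\Vert_{W^{P+1,2}(\Omega_\sigma)} +
\Vert \project u\Vert_{W^{P+1,2}(\Omega_\sigma)}
\end{equation*}
and bound the two terms separately.

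For the second term I would apply \cref{thm:approx} semi-norm by semi-norm: for every $k\in\lbrace 0,\ldots,P+1\rbrace$ we have $|\project u - u|_{W^{k,2}(\Omega_\sigma)}\leq C\sigma^{P+1-k}|u|_{W^{P+1,2}(\Omega_\sigma)}$, which, combined with the triangle inequality and (for $\sigma$ bounded from above) a summation over~$k$, gives $\Vert\project u\Vert_{W^{P+1,2}(\Omega_\sigma)}\leq C\Vert u\Vert_{W^{P+1,2}(\Omega_\sigma)}$.

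For the first term I would observe that both $\solve u$ and $\project u$ lie in $V_\sigma^P(\Omega_\sigma)$, so their difference does too; since every element in $\Omega_\sigma$ is by definition a full Cartesian cell, the inverse estimates \cref{eqn:investimates} may be applied to each semi-norm up to order~$P+1$, yielding
\begin{equation*}
\Vert\solve u-\project u\Vert_{W^{P+1,2}(\Omega_\sigma)}\leq
C\sigma^{-(P+1)}\Vert\solve u-\project u\Vert_{L^2(\Omega_\sigma)}.
\end{equation*}
The $L^2$-factor on the right was essentially bounded already inside the proof of \cref{thm:convergence}: combining estimate~\cref{eqn:omfg} with \cref{thm:approx} for $k=0$ gives $\Vert\solve u-\project u\Vert_{L^2(\Omega_\sigma)}\leq C(\varepsilon)\sigma^{P+1}\Vert u\Vert_{W^{P+1,2}(\Omega_\sigma)}$. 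The factor $\sigma^{P+1}$ cancels the $\sigma^{-(P+1)}$ blow-up from the inverse estimate, leaving a $\sigma$-independent constant. Adding the two bounds yields the claim.

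There is no real obstacle here; the only point worth being careful about is that the inverse estimates in \cref{lem:investimates} are stated for elements completely contained in the underlying domain, but we apply them with $\Omega_\sigma$ itself in that role, where the property holds tautologically. The substantive content of the corollary is precisely the scale cancellation in the first term: the quasi-best approximation in the $L^2$-like norm induced by $A$ is simultaneously $W^{P+1,2}$-stable, even though the variational problem does not involve derivatives of order $P+1$ away from $\Omega_\sigma^\Gamma$.
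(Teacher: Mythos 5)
Your proposal is correct and follows essentially the same route as the paper: split off the quasi-interpolant $\project u$, bound $\Vert\project u\Vert_{W^{P+1,2}(\Omega_\sigma)}$ by the boundedness of $\project$ from \cref{thm:approx}, and control $\Vert\solve u-\project u\Vert_{W^{P+1,2}(\Omega_\sigma)}$ by the inverse estimates~\cref{eqn:investimates} followed by~\cref{eqn:omfg}, with the $\sigma^{\pm(P+1)}$ factors cancelling. The only difference is cosmetic (the paper first inserts $u$ itself into the triangle inequality before introducing $\project u$), and your remark about applying the element-wise inverse estimates over $\Omega_\sigma$ is a fair reading of how the paper itself uses \cref{lem:investimates}.
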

\begin{proof}
One has $\Vert\solve u\Vert_{W^{P+1,2}(\Omega_\sigma)}\leq
\Vert\solve u - u\Vert_{W^{P+1,2}(\Omega_\sigma)} + \Vert u \Vert_{W^{P+1,2}(\Omega_\sigma)}.$
Using $\project$ from \cref{thm:approx}, we furthermore obtain:
\begin{math}
\Vert\solve u - u\Vert_{W^{P+1,2}(\Omega_\sigma)} \leq
\Vert\solve u - \project u\Vert_{W^{P+1,2}(\Omega_\sigma)} +
\Vert\project u - u\Vert_{W^{P+1,2}(\Omega_\sigma)}.
\end{math}
The second term can be bounded by $C\Vert u \Vert_{W^{P+1,2}(\Omega_\sigma)}$ due
to the boundedness of $\project$. The first term can be bounded by first
applying the inverse estimates~\cref{eqn:investimates} followed by estimate~\cref{eqn:omfg}.
\end{proof}
\begin{theorem}[Optimality]\label{thm:optimality}
Let the conditions of \cref{thm:perturbedprob} be fulfilled. Furthermore assume
that $A_h = A$, $P = m$, and $s = 2$. Then the smoothed solution $u_\sigma$ fulfills:
\begin{equation}
\Vert u_\sigma - u\Vert_{W^{-(P+1),2}(\Omega)}\leq
C(\varepsilon)h^{P+1} \Vert u\Vert_{W^{P+1,2}(\Omega)}.
\end{equation}
\end{theorem}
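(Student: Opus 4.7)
The plan is a duality argument in the spirit of Aubin--Nitsche, adapted to the stabilized setting. Writing
\begin{equation*}
\Vert u_\sigma - u\Vert_{W^{-(P+1),2}(\Omega)}
= \sup_{\varphi\in W^{P+1,2}(\Omega),\ \Vert\varphi\Vert_{W^{P+1,2}(\Omega)}\leq 1}
\bigl|(u_\sigma-u,\varphi)_{L^2(\Omega)}\bigr|,
\end{equation*}
I would fix such a $\varphi$, extend it to $\extend\varphi \in W^{P+1,2}(\Omega_\sigma)$ via the Stein operator, and introduce the discrete adjoint state $\psi_\sigma := \solve(\extend\varphi)\in V_\sigma^P(\Omega_\sigma)$. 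I would also denote by $u_\sigma^0 := \solve u$ the unperturbed solution (so that $A(u_\sigma-u_\sigma^0,w_\sigma)=\langle u_h-u,w_\sigma\rangle$ by linearity), and split the error into the \emph{particle-field} part $(u_\sigma-u_\sigma^0,\varphi)_{L^2(\Omega)}$ and the \emph{smoothing} part $(u_\sigma^0-u,\varphi)_{L^2(\Omega)}$.

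For the particle-field part, the symmetry of $A$ together with the defining equations of $\psi_\sigma$ and $u_\sigma-u_\sigma^0$ gives
\begin{equation*}
(u_\sigma-u_\sigma^0,\varphi)_{L^2(\Omega)}
= A(\psi_\sigma,u_\sigma-u_\sigma^0)
= \langle u_h-u,\psi_\sigma\rangle.
\end{equation*}
Bounding by the particle-field estimate~\cref{eqn:particleerror} and \cref{thm:corollary} applied to $\psi_\sigma$, together with $P=m$, immediately yields the rate $h^{m+1}=h^{P+1}$.

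The main work lies in the smoothing part. Using the same symmetry trick I first obtain
\begin{equation*}
(u_\sigma^0-u,\varphi)_{L^2(\Omega)}
= (\extend u,\psi_\sigma-\extend\varphi)_{L^2(\Omega)}.
\end{equation*}
The hard point is that this is not directly a Galerkin orthogonality because of the stabilization. I would handle it by inserting $\project(\extend u)\in V_\sigma^P(\Omega_\sigma)$ from \cref{thm:approx} and splitting the integral into $(\extend u-\project(\extend u),\psi_\sigma-\extend\varphi)_{L^2(\Omega)}$ and $(\project(\extend u),\psi_\sigma-\extend\varphi)_{L^2(\Omega)}$. The first piece is controlled by applying the approximation bound \cref{thm:approx} to $\extend u$ and \cref{thm:convergence} to $\psi_\sigma-\extend\varphi$; each contributes $\sigma^{P+1}$, so the product is $\sigma^{2(P+1)}=h^{P+1}$. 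For the second piece I would exploit that $\project(\extend u)$ is an admissible test function in the adjoint equation, giving the near-orthogonality identity
\begin{equation*}
(\project(\extend u),\psi_\sigma-\extend\varphi)_{L^2(\Omega)}
= -\varepsilon\, j(\project(\extend u),\psi_\sigma),
\end{equation*}
and then bound the right-hand side by Cauchy--Schwarz on $j$. Using $j(v,v)^{1/2}\leq C\sigma^{P+1}\Vert v\Vert_{W^{P+1,2}(\Omega_\sigma)}$ for both arguments, together with $W^{P+1,2}$-boundedness of $\project$ and \cref{thm:corollary} for $\psi_\sigma$, again produces the factor $\sigma^{2(P+1)}=h^{P+1}$.

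The principal obstacle is exactly this stabilization contribution: the variational problem is not a pure $L^2$-projection, so the standard duality argument breaks and one must recover the extra order of convergence through the $j$-term. Once both pieces are assembled, the triangle inequality and the supremum over $\varphi$ yield the claimed bound, with a constant absorbing $\varepsilon$ and the norm of $\extend$.
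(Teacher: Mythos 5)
Your proposal is correct, and it reaches the same three error contributions as the paper --- an approximation term of order $\sigma^{2(P+1)}$, a particle-field term of order $h^{m+1}$, and a stabilization term of order $\varepsilon\sigma^{2(P+1)}$ --- but it organizes the duality argument differently. The paper does not introduce an adjoint state: it splits the test function as $\varphi=(\varphi-\project\extend\varphi)+\project\extend\varphi$, handles the first piece by Cauchy--Schwarz with \cref{thm:approx} and \cref{thm:perturbedprob}, and substitutes $\project\extend\varphi$ directly into the perturbed equation for $u_\sigma$ to expose $\langle u_h-u,\project\extend\varphi\rangle-\varepsilon j(u_\sigma,\project\extend\varphi)$. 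The price of that route is that the stabilization term involves $\Vert u_\sigma\Vert_{W^{P+1,2}(\Omega_\sigma)}$, which the paper must then bound through an extra step combining \cref{thm:corollary} with \cref{eqn:lol}. Your version instead introduces $\psi_\sigma=\solve(\extend\varphi)$ and $u_\sigma^0=\solve u$, uses the symmetry of $A$ twice, and lands the $W^{P+1,2}$-burden on $\psi_\sigma$, where \cref{thm:corollary} applies directly since $\psi_\sigma$ solves the \emph{unperturbed} problem; this sidesteps the appeal to \cref{eqn:lol} at the cost of an additional intermediate solution and the identity $(u_\sigma^0-u,\varphi)_{L^2(\Omega)}=(\extend u,\psi_\sigma-\extend\varphi)_{L^2(\Omega)}$, which I checked and which does follow from the symmetry of $A$ and the two defining equations. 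Both arguments hinge on the same key fact, namely that $j(v,v)^{1/2}\leq C\sigma^{P+1}\Vert v\Vert_{W^{P+1,2}(\Omega_\sigma)}$ recovers the extra order despite the loss of exact Galerkin orthogonality, and both use $\sigma=\sqrt{h}$ only in the final step.
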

\begin{proof}
Let $\varphi\in W^{P+1,2}(\Omega)$ be arbitrary but fixed. With~$\project$
from \cref{thm:approx} and the Stein extension operator~$\extend$ one has:
\begin{equation}
\idx{\Omega}{}{(u_\sigma-u)\varphi}{x} =
\idx{\Omega}{}{(u_\sigma-u)(\varphi-\project\extend\varphi)}{x} +
\idx{\Omega}{}{(u_\sigma-u)\project\extend\varphi}{x}.
\end{equation}
For the first term we obtain with the Cauchy--Schwarz inequality, \cref{thm:approx},
and~\cref{thm:perturbedprob}:
\begin{multline}
\idx{\Omega}{}{(u_\sigma-u)(\varphi-\project\extend\varphi)}{x} \leq
\Vert u - u_\sigma\Vert_{L^2(\Omega)}\Vert \varphi-\project\extend\varphi\Vert_{L^2(\Omega)} \\ \leq 
C(\varepsilon)\sigma^{2(P+1)}\Vert u\Vert_{W^{P+1,2}(\Omega)}
\Vert\varphi\Vert_{W^{P+1,2}(\Omega)}.
\end{multline}
For the second term one has:
\begin{multline}
\idx{\Omega}{}{(u_\sigma-u)\mathcal{PE}\varphi}{x} =
A(u_\sigma,\project\extend\varphi) - \idx{\Omega}{}{u\project\extend\varphi}{x}
-\varepsilon j(u_\sigma,\project\extend\varphi) = \\
\langle u_h-u,\project\extend\varphi\rangle - \varepsilon j(u_\sigma,\project\extend\varphi) \leq \\
C\biggl( \Vert u_h-u\Vert_{W^{-(P+1),2}(\Omega)}\Vert\varphi\Vert_{W^{P+1,2}(\Omega)} +
\varepsilon\sigma^{2(P+1)}\Vert u_\sigma\Vert_{W^{P+1,2}(\Omega_\sigma)}
\Vert\varphi\Vert_{W^{P+1,2}(\Omega)}\biggr).
\end{multline}
It remains to show that $\Vert u_\sigma\Vert_{W^{P+1,2}(\Omega_\sigma)} \leq
C(\varepsilon)\Vert u\Vert_{W^{P+1,2}(\Omega)}$. To see this, note that we have:
\begin{multline}
\Vert u_\sigma\Vert_{W^{P+1,2}(\Omega_\sigma)} \leq
\Vert\solve\extend u\Vert_{W^{P+1,2}(\Omega_\sigma)} +
\Vert u_\sigma - \solve\extend u\Vert_{W^{P+1,2}(\Omega_\sigma)} \leq\\
C(\varepsilon)\Vert u\Vert_{W^{P+1,2}(\Omega)} +
\Vert u_\sigma -  \solve\extend u\Vert_{W^{P+1,2}(\Omega_\sigma)}.
\end{multline}
Applying inequality~\cref{eqn:lol} to the second term, collecting all the terms,
and noting that $\sigma = \sqrt{h}$ yields the result.
\end{proof}

\subsection{Conservation Properties}
In the introduction we mentioned the conservation properties of vortex methods
as one of their highlights. In this section we make some brief remarks on some of
these properties under the assumption that $A_h = A$. For brevity, we will focus on the
two-dimensional case, but remark that all of the results we present here analogously
hold in three-dimensions.

The conserved quantities circulation, linear momentum, and angular momentum are
given by $I_0=\idx{\Omega}{}{1\cdot\omega}{x}$, $\vv{I}_1=\idx{\Omega}{}{(x_2,-x_1)^\top\omega}{x}$,
and $I_2=\idx{\Omega}{}{|x|^2\omega}{x}$, respectively~\cite[Section~1.7]{majda2001}.
Noting that the stabilization term $j$ vanishes if one of its arguments is a
polynomial of total degree less than $P$, one obtains for the solution $\omega_\sigma$
of~\cref{eqn:varprob} with right-hand side $\omega_h$:
$(\omega_\sigma,x^\alpha)_{L^2(\Omega)} = \omega_h(x^\alpha)$ for all $|\alpha|\leq P$.
For $P=1$ we consequently conserve $I_0$ and $\vv{I}_1$, for $P=2$ one additionally
conserves angular momentum $I_2$. This is important, because in vortex methods
body forces are often computed using the relation $\vv{F} = -\rho\ddx{\vv{I}_1}{t}$,
where $\rho$ denotes the fluid's density.

\section{Numerical Experiments}\label{sec:experiments}
We have now established the necessary results to return to our original motivation.
Given a particle approximation $\omega_h\in W^{-(m+1),2}(\Omega)$ of the vorticity $\omega$
that satisfies an error-bound of the form $\Vert\omega_h-\omega\Vert_{W^{-(m+1),2}(\Omega)}
\leq Ch^{m+1}\Vert\omega\Vert_{W^{m+1,2}(\Omega)}$, we want to obtain a smooth
approximation $\omega_\sigma$, such that we can compute the corresponding induced
velocity field using the Biot--Savart law $\vv{u}_\sigma =\vv{K}\star\omega_\sigma$.
One can then use this approximate velocity field to advance $\omega_h$ in time
by convecting the particles according to $\ddx{x_i}{t}(t)=\vv{u}_\sigma(x_i(t),t)$.

In \cref{sec:pufem} we introduced the spaces $V_\sigma^P(\Omega)$ that can be used
as test-spaces for the particle field $\omega_h$. In \cref{sec:formulation} we modeled
the regularization problem as a perturbation to a stabilized $L^2$-projection onto
the spaces $V_\sigma^P(\Omega)$. The analysis indicated that one should choose $P=m$
and $\sigma=\sqrt{h}$, resulting in an a-priori error estimate of
$\Vert\omega_\sigma-\omega\Vert_{L^2(\Omega)}\leq C\sigma^{P+1}\Vert\omega\Vert_{W^{P+1,2}(\Omega)}$.
The Calder\'on--Zygmund inequality~\cref{eqn:calzyg} then tells us that one may expect
$\Vert\vv{u}_\sigma-\vv{u}\Vert_{W^{1,2}(\Omega)}\leq C\sigma^{P+1}\Vert\omega\Vert_{W^{P+1,2}(\Omega)}$
for the resulting velocity field $\vv{u}_\sigma := \vv{K}\star\omega_\sigma$. This
analogously holds in the three-dimensional case.

In this section we perform several numerical experiments. We will first describe the
experimental setup. We then perform experiments on a scalar particle field and confirm
the results of our analysis. In particular, the experiments will show that the common
practice of choosing $\sigma$ proportional to $h$ instead of $\sqrt{h}$ does not lead
to convergent schemes. We will then illustrate the practicality of our scheme, by
approximating a vector-valued vorticity field, computing its induced velocity field,
and measuring the error. We finish this section with experiments on the condition
number of the resulting systems and its dependence of the stabilization parameter
$\varepsilon$.

\subsection{Setup}\label{sec:setup}
We define our computational test domain as $\Omega=(-\tfrac{1}{2},\tfrac{1}{2})^3$.
While this is one of the simplest cases for mesh-based methods, due to its sharp
corners and edges it is one of the hardest for conventional vortex blob methods. 
In order to obtain quadrature rules which are not aligned to the Cartesian grid,
the mesh generator Gmsh~\cite{geuzaine2009} was used to obtain a tetrahedral
mesh of the domain, consisting of 24 tetrahedra with maximum edge-length $h=1$. 
The quadrature rules are obtained by applying the mid-point rule to this mesh
and its subsequent uniform refinements from level $l=0$ down to level $l = 8$,
corresponding to $h = 2^{-8}\approx 0.004$ and $N = 402\,653\,184$ quadrature nodes.

Preliminary experiments showed good results for a stabilization parameter of
$\varepsilon = 0.001$. Unless explicitly stated otherwise, we will use this value
for all of our computations. We will use degree $P=1$ for the PUFEM spaces, set
$\sigma := Ch^{1/s}$, and experiment on various choices of $C$ and~$s=1,2$. For
the integration of the bilinear form $A_h$ we use the following approach: if in
a pair of basis functions one of them has cut support, we use the same quadrature
rule as for the particle field. Otherwise precomputed values from the reference
element $\hat{Q}$ are used. The resulting systems of equations are solved using
the conjugate gradient method, where we apply a simple diagonal scaling as
preconditioner. The iteration was stopped when a relative residual of $10^{-12}$
was reached. This was usually the case after less than 100 iterations, with some
exceptions for coarse refinement levels $l$ and the case $C=0.5, s = 1$.

\subsection{Scalar Particle Field}\label{sec:scalar_field}
The common practice to choose the smoothing length $\sigma$ proportional to $h$
may in special cases be justified with the analysis of Cottet and Koumoutsakos~%
\cite[Section 2.6]{cottet2000}. They assume that the quadrature rules used are
of infinite order, essentially corresponding to the case $m=\infty$. Such rules,
however, typically only exist in very special cases, such as a cube with periodic
or zero boundary conditions. To show that this approach does not work in a
more general setting, we aim to approximate the following function:
\begin{equation}
u(x) = \cos{(4\pi x_1)}\qquad
x\in (-\tfrac{1}{2},\tfrac{1}{2})^3.
\end{equation}
This function \emph{does not vanish at the boundary.} The application of
conventional blob-methods would thus blur the boundaries and lead to only slowly
converging schemes. We define the particle field as $u_h:=\sum_{i=1}^N w_i u(x_i)
\delta(x-x_i)$, with $\delta$ denoting the Dirac Delta, and $x_i$ and $w_i$ being
the positions and weights of the mid-point quadrature rule applied to the
tetrahedra of the mesh at various refinement levels~$l$.

\Cref{fig:error_s1} shows the error $\Vert u - u_\sigma\Vert_{L^2(\Omega)}$ for
$\sigma = Ch$ for various choices of $C$ at different refinement levels. Choosing
$C=0.5$ results in approximations with large errors, which do not decrease
significantly under mesh refinement. The case $l=8$ was not computed due to the
large memory requirements. The other curves exhibit similar behavior: in the
beginning and intermediate stages the error decreases, however, only at an
approximately linear, not quadratic rate. This rate further decreases and
approaches zero under mesh refinement, confirming the predicted bound of the
quadrature error $\bigO{h^{m+1}\sigma^{-(m+1)}}=\bigO{1}$. Choosing
larger values $C$ somewhat delays but does not prevent this effect, at the cost of
larger errors on coarse refinement levels.

\Cref{fig:error_s2} shows the corresponding error for the case $\sigma=C\sqrt{h}$.
All choices of $C$ lead to convergent schemes which approach the predicted
convergence rate of $\bigO{h}$. In our experiments, smaller choices of $C$
lead to smaller errors; however choosing $C$ too small causes larger errors
in the coarser cases. In our test case a choice somewhere between $C=0.25$ and
$C=0.5$ seems to be optimal.

\begin{figure}
\centering
	\begin{tikzpicture}
	\begin{semilogyaxis}%
	[
		title  = {$s=1$},
		xlabel = {Refinement Level~$l$},
		ylabel = {$L^2$-Error},
		grid   = {major},
		xmin   = {0},
	    xmax   = {8},
	    ymin   = {1e-3},
		ymax   = {1},
	    legend entries = {$C=2$, $C=1.5$, $C=1$, $C=0.5$},
	    legend cell align = {left},
	]
	\pgfplotsset{xtick={0,1,2,3,4,5,6,7,8}}
	\pgfplotsset{every axis legend/.append style={
	at={(.05,.05)},
	anchor=south west}}
	\addplot table[x=l,y=C_2_00]{cosine_s1.dat};
	\addplot table[x=l,y=C_1_50]{cosine_s1.dat};
	\addplot table[x=l,y=C_1_00]{cosine_s1.dat};
	\addplot table[x=l,y=C_0_50]{cosine_s1.dat};
	\addplot[domain=3:5]{2^(-(x+2.5))} node[below=5,pos=0.5] {$\bigO{h}$};
	\end{semilogyaxis}
	\end{tikzpicture}
\caption{\label{fig:error_s1}$L^2$-Error of the smoothed approximation
$u_\sigma$ in the case $\sigma = Ch$.}
\vspace{\baselineskip}
\centering
	\begin{tikzpicture}
	\begin{semilogyaxis}%
	[
		title  = {$s=2$},
		xlabel = {Refinement Level~$l$},
		ylabel = {$L^2$-Error},
		grid   = {major},
		xmin   = {0},
		xmax   = {8},
	    ymin   = {1e-3},
		ymax   = {1},
	    legend entries = {$C=1.5$, $C=1.0$, $C=0.5$, $C=0.25$},
	    legend cell align = {left},
	]
	\pgfplotsset{xtick={0,1,2,3,4,5,6,7,8}}
	\pgfplotsset{every axis legend/.append style={
	at={(.05,.05)},
	anchor=south west}}
	\addplot table[x=l,y=C_1_50]{cosine_s2.dat};
	\addplot table[x=l,y=C_1_00]{cosine_s2.dat};
	\addplot table[x=l,y=C_0_50]{cosine_s2.dat};
	\addplot table[x=l,y=C_0_25]{cosine_s2.dat};
	\addplot[domain=6:8]{2^(-(x+1.5))} node[above=5,pos=0.66] {$\bigO{h}$};
	\end{semilogyaxis}
	\end{tikzpicture}
\caption{\label{fig:error_s2}$L^2$-Error of the smoothed approximation
$u_\sigma$ in the case $\sigma = C\sqrt{h}$.}
\end{figure}

\subsection{Vector-valued Particle Field and Velocity Evaluation}
In this section we show that our scheme can drastically reduce the cost of the
computationally most expensive part of vortex methods, the velocity evaluation.
To this end, we prescribe:
\begin{equation}
\vv{u}(x) :=
\begin{pmatrix}
x_2 \\ -x_1 \\ 0
\end{pmatrix}
\exp\biggl(-\frac{1}{1-4|x|^2}\biggr)\qquad
x\in (-\tfrac{1}{2},\tfrac{1}{2})^3.
\end{equation}
This velocity field is smooth and fulfills $\nabla\cdot\vv{u}\equiv 0$. It was
chosen such that it vanishes at the boundaries, so that it can be retrieved
from the vorticity field $\gv{\omega}:=\nabla\times\vv{u}$ through the
Biot--Savart law without any boundary integral terms:
\begin{equation}
\vv{u}(x) = -\frac{1}{4\pi}\idx{\Omega}{}{\frac{x-y}{\vert x-y\vert^3}\times\gv{\omega}(y)}{y}.
\end{equation}
Analogous to the previous section, we define the particle approximation:
\begin{equation}
\gv{\omega}_h :=
\sum_{i=1}^{N}w_i\gv{\omega}(x_i)\delta(x-x_i).
\end{equation}
Experiments in the previous section suggested a choice of $\sigma = C\sqrt{h}$,
with $C$ between $0.25$ and $0.5$. We consequently choose $C=0.375$ and obtain
after applying the method to each component a smoothed approximation
$\gv{\omega}_\sigma$ with an anticipated convergence rate of $\bigO{\sigma^2}$
in the $L^2$-norm. In order to evaluate the Biot--Savart law for this vorticity
field, we chose the coarsest level $l$ such that the corresponding mesh width
$2^{-l}$ is smaller than $\sigma$. We then compute the orthogonal projection of
$\gv{\omega}_\sigma$ onto the \emph{standard} finite element space of piecewise
linear functions on that level. The Biot--Savart integral can then be computed
by summing over the tetrahedra, for which Suh published analytic formulas~\cite{suh2000}.
We couple these formulas with a fast multipole method~\cite{greengard1987,dehnen2002}
for the far-field evaluation. The resulting velocity field is approximated by
taking the nodal interpolation onto the standard finite element space of piecewise
\emph{quadratics} to obtain an approximate velocity field $\vv{u}_\sigma$.

Most conventional schemes apply the fast multipole method directly to the particle
field, leading to a complexity of $\bigO{N}=\bigO{h^{-d}}$, with a large hidden
constant. Note that in our case the method is applied to the coarser smoothed
approximation, leading to a complexity of only $\bigO{h^{-\frac{d}{2}}}$.

\cref{fig:velocity} shows the $L^2$-errors in the approximate smoothed vorticity
field $\gv{\omega}_\sigma$ and the velocity field $\vv{u}_\sigma$. The smoothed
vorticity field converges at a rate of $\bigO{\sigma^2}=\bigO{h}$ as expected.
With the Calder\'on--Zygmund inequality~\cref{eqn:calzyg} we obtain that the
same error bound holds for the velocity in the $W^{1,2}$-norm. As the results
indicate, in the $L^2$-norm the error seems to reduce by one power in $\sigma$
faster, resulting in a rate of $\bigO{h^{1.5}}$.
\begin{figure}
\centering
\begin{tikzpicture}
\begin{semilogyaxis}%
[
	title  = {$C=0.375$},
	xlabel = {Refinement Level $l$},
	ylabel = {$L^2$-Error},
	grid   = {major},
	xmin   = {0},
    xmax   = {8},
    ymin   = {1e-6},
	ymax   = {1},
    legend entries = {vorticity,velocity},
	legend cell align = {left},
]
\pgfplotsset{xtick={0,1,2,3,4,5,6,7,8}}
\pgfplotsset{every axis legend/.append style={
at={(.05,.05)},
anchor=south west}}
\addplot table[x=l,y=vorticity]{velocity.dat};
\addplot table[x=l,y=velocity ]{velocity.dat};
\addplot[domain=3:7]{2^(-1.00*(x+1.0)} node[above=5,pos=0.5] {$\bigO{h}$};
\addplot[domain=3:7]{2^(-1.50*(x+3.5)} node[above=5,pos=0.5] {$\bigO{h^{1.5}}$};
\end{semilogyaxis}
\end{tikzpicture}
\caption{\label{fig:velocity}$L^2$-Error of the smoothed vorticity approximation
$\gv{\omega}_\sigma$ and the resulting finite-element approximation of the
corresponding velocity $\vv{u}_\sigma$.}
\vspace{\baselineskip}
\centering
\begin{tikzpicture}
\begin{loglogaxis}%
[
	title  = {$C=0.25$, $s = 2$},
	xlabel = {Stabilization Parameter $\varepsilon$},
	ylabel = {$\cond(\sm{D^{-1}A_h})$},
	grid   = {major},
	xmin   = {1e-6},
    xmax   = {1},
    ymin   = {1e1},
	ymax   = {1e6},
    xminorticks = {false},
    legend entries = {$l=2$, $l=3$, $l=4$},
	legend cell align = {left},
]
\pgfplotsset{every axis legend/.append style={at={(.95,.95)},anchor= north east}}
\addplot table[x=eps,y=l2]{condition.dat};
\addplot table[x=eps,y=l3]{condition.dat};
\addplot table[x=eps,y=l4]{condition.dat};
\end{loglogaxis}
\end{tikzpicture}
\caption{\label{fig:cond}Condition number of the diagonally scaled system matrix $\sm{D^{-1}A_h}$.}
\end{figure}

\subsection{System Condition Number}\label{sec:conditioning}
In this section we investigate the effect of the stabilization parameter $\varepsilon$
on the condition number of the system matrix. In \cref{sec:scalar_field} we observed
instabilities on the coarse levels $l$ in the case $C=0.25, s=2$. We therefore chose 
this particular configuration for our experiments. We used the following set of
functions as our basis for the spaces $V_\sigma^P(\Omega_\sigma)$:
\begin{equation}
\mathcal{B}_\sigma^P := \biggl\lbrace
\varphi_i\biggl(\frac{x-x_i}{\sigma}\biggr)^\alpha 
\,\biggl.\biggr|\,
Q_i\in\Omega_\sigma, |\alpha|\leq P
\biggr\rbrace.
\end{equation}
We may assign a numbering $\mathcal{I} = \lbrace 1,\ldots,n\rbrace$ to this set, and
subsequently refer to its members as $\mathcal{B}_\sigma^P\ni\psi_k$, $k\in\mathcal{I}$.
We can then define the system matrix $\sm{A_h}\in\mathbb{R}^{n\times n}$ via the relation:
\begin{equation}
\sv{e_k}^\top\sm{A_h}\sv{e_l} = A_h(\psi_k,\psi_l) = a_h(\psi_k,\psi_l) + \varepsilon j(\psi_k,\psi_l),
\qquad\forall k,l\in\mathcal{I},
\end{equation}
where $\sv{e_k}\in\mathbb{R}^n$ refers to the $k$-th Cartesian basis vector, and
the approximate bilinear form $a_h$ is defined as described in the numerical setup
(\cref{sec:setup}). We are then interested in the condition number of the
diagonally scaled matrix $\sm{D^{-1}A_h}$, where $\sm{D}:=\diag\sm{A_h}$.

\Cref{fig:cond} shows the condition number of $\sm{D^{-1}A_h}$ for various
refinements levels~$l$ as a function of $\varepsilon$. In the case $l=2$ the
quadrature error is so large that the resulting matrix $\sm{A_h}$ ceased being
positive definite for $\varepsilon = 10^{-3}$, and is even singular for
$\varepsilon = 0$. This explains the large error observed in \cref{sec:scalar_field}
for this case. But even then a sufficiently large choice of $\varepsilon$ results
in a well conditioned system. For the finer refinement levels a choice of
$\varepsilon$ between $10^{-3}$ and $10^{-1}$ seems to be optimal and reduces
the matrix' condition number below 100. The effect becomes slightly less
pronounced with increasing $l$. We can thus conclude that for such a choice of
$\varepsilon$ the stabilization removes the ill-conditioning of the system,
especially in the presence of moderate quadrature errors.

\section{Conclusions and Outlook}
We have presented a new method to tackle the particle regularization problem,
based on a stabilized fictitious domain formulation with smooth shape-functions.
Our approach enjoys all the benefits of the conventional blob-methods: the
resulting smoothed approximations are $C^\infty$ functions and conserve all
moments up to order $P$. On top of that, our approach can accurately handle
general geometries. The evaluation of the smoothed approximations is cheap and
straightforward and does not require a summation over all particles as in the
case of blob functions. The fact that we can only achieve a convergence rate of
$\mathcal{O}(h^{\frac{m+1}{2}})$ as opposed to $\mathcal{O}(h^{m+1})$ might seem
disappointing, but is intrinsic to the smoothing problem at hand. This can be
illustrated in a simple one-dimensional example: given an interval of length $h$
on the real line, the $m$-node Gaussian quadrature rule will have an error bound
of $\mathcal{O}(h^{2m})$. With $m$ function values, however, we can only
construct an interpolation polynomial of degree $m-1$, having the halved error
bound $\mathcal{O}(h^m)$. \Cref{thm:optimality} shows that the smoothed
approximation is essentially just as accurate as the particle field. This also
means that it can be used to reinitialize overly distorted particle fields.
Furthermore, this means that the smoothed vorticity field has much greater
length-scales than the particle spacing.

As a consequence the velocity evaluation -- usually the most expensive part of
vortex methods -- can be drastically sped up. In our numerical experiments we
gave an example of a simple mesh-based scheme for this, which was chosen because
of its simplicity. A disadvantage of this approach is that the resulting velocity
approximation ceases being divergence-free. For future research it would be
interesting to make use of the fact that in three-dimensional space the
Biot--Savart law is of the form $\vv{u}=\nabla\times(G\star\gv{\omega})$, where
$G(x)= (4\pi|x|)^{-1}$ denotes the fundamental solution of the Laplacian. The
\enquote{curl spaces} $\nabla\times\bigl(V_\sigma^P(\Omega)\bigr)^3$ would thus
be a more natural choice for approximating the velocity, while also being
divergence-free in the strong, pointwise sense.

It is not clear whether the exact variational formulation~\cref{eqn:varprob} is
actually unstable without stabilization. A result by Reusken~\cite[Theorem~5]%
{reusken2008} indicates that a rescaling might be sufficient to achieve a stable
formulation. On the other hand, this result assumes that the bilinear form $a$
can be computed exactly. The experiments of \cref{sec:conditioning} suggest
that stabilization is especially beneficial in the presence of quadrature
errors.

Our current approach uses a uniform grid size $\sigma$ and a fixed polynomial
degree~$P$. The partition of unity approach, however, is general enough to be
extended to adaptive grids and varying polynomial degrees. For the future,
experiments with $\sigma$-, $P$-, or $\sigma P$-adaptive schemes are another
interesting field for further research.

We believe the stabilized fictitious domain approach with smooth shape functions
is not only useful for vortex particle regularization but also for other problems
which require higher degrees of smoothness, such as certain problems from linear
elasticity like the Kirchhoff--Love thin-plate theory.

The source code of the software used to obtain the results of this article can
be obtained from the authors upon request.

\section*{Acknowledgements}
We would like to thank the reviewers for their fruitful comments and suggestions.
The first author would also like to express his gratitude to Sven Gro{\ss}, Arnold
Reusken, and all the members of the DROPS team at the Lehrstuhl f\"ur Numerische
Mathematik (LNM) at RWTH Aachen University, Germany, with whom he previously
worked on two-phase flows and the ghost penalty stabilization. Without their
support and the knowledge received during that time, this work would not have
been possible. 

Last but not least, the first author receives the MEXT scholarship of the Japanese
Ministry of Education and was supported by the Keio Leading Edge Laboratory
of Science and Technology (no grant numbers allotted). Without their financial
support this research would have been impossible to conduct.

\appendix
\bibliographystyle{siamplain}
\bibliography{papers}
\end{document}